\definecolor{gray}{rgb}{0.85,0.85,0.85}
\newcommand{\new}{\newcommand*}
\newcommand{\newe}{\newenvironment*}
\new{\newt}{\newtheorem}
\new{\ed}{\end{document}}
\def\alf{\alpha}
\def\N{{\scriptscriptstyle N}}
\def\M{{\scriptscriptstyle M}}
\def\ps@pprintTitle{%
 \let\@oddhead\@empty
 \let\@evenhead\@empty
 \def\@oddfoot{\centerline{\thepage}}%
 \let\@evenfoot\@oddfoot}
\title{Multi-domain Spectral Collocation Method for Variable-Order Nonlinear Fractional Differential Equations\thanks{This work was supported by the MURI/ARO on ``Fractional PDEs for Conservation Laws and Beyond: Theory, Numerics and Applications" (W911NF-15-1-0562).} The research of the first author was partially supported by NSF of China (No. 11661048).
}
\author{
  Tinggang Zhao\footnotemark[2]
  \and
  Zhiping Mao\footnotemark[3] \footnotemark[4]
  \and
  George Em Karniadakis\footnotemark[3]
}
\begin{document}

%\graphicspath{figures/}
\markboth{}{}

\footnotetext[2]{School of Mathematics, Lanzhou City University, Lanzhou 730070, China ({tinggang\_zhao@brown.edu}).}
\footnotetext[3]{Division of Applied Mathematics, Brown University, Providence, RI 02912, USA ({zhiping\_mao@brown.edu}, {george\_karniadakis@brown.edu}).}
\footnotetext[4]{Corresponding author.}

\maketitle

% REQUIRED
\begin{abstract}
Spectral and spectral element methods using Galerkin type formulations are efficient for solving linear fractional PDEs (FPDEs) of constant order but are not efficient in solving nonlinear FPDEs and cannot handle FPDEs with variable-order. In this paper, we present a multi-domain spectral collocation method that addresses these limitations. We consider FPDEs in the Riemann-Liouville sense, and employ Jacobi Lagrangian interpolants to represent the solution in each element. We provide variable-order differentiation formulas, which can be computed efficiently for the multi-domain discretization taking into account the nonlocal interactions. We enforce the interface continuity conditions by matching the solution values at the element boundaries via the Lagrangian interpolants, and in addition we minimize the jump in (integer) fluxes using a penalty method. We analyze numerically the effect of the penalty parameter on the condition number of the global differentiation matrix and on the stability and convergence of the penalty collocation scheme. We demonstrate the effectiveness of the new method for the fractional Helmholtz equation of constant and variable-order using $h-p$ refinement for different values of the penalty parameter. We also solve the fractional Burgers equation with constant and variable-order and compare with solutions obtained with a single domain spectral collocation method.
\end{abstract}

% REQUIRED
\begin{keywords}
Penalty method, $h-p$ refinement, Fractional Helmholtz equation, Fractional Burgers equation
\end{keywords}

% REQUIRED
\begin{AMS}
  65N35, 65M70,  41A05, 41A25
\end{AMS}

\section{Introduction}

Recently, fractional differential equations (FDEs) have been applied in modeling a wide range of complex physical processes, for example, anomalous transport processes~\cite{Bar01, MetK04, MetK00}, frequency-dependent damping behavior in viscoelastic materials~\cite{DieF99, BagT83, BagT85}, continuum and statistical mechanics~\cite{Mai97}, solid mechanic~\cite{RosS97}, economics~\cite{Bai96}, and so on.
Moreover, FDEs with {\it variable-order} can more accurately describe the anomalous diffusion since the diffusion rate may be time or space dependent, see \cite{Kikuchi1997, Ruiz2004} and references therein.

Close forms of solutions of even linear FDEs, especially those with variable-order, is difficult to obtain. Thus, robust numerical methods are required to discretize the fractional operators.
Proposed numerical methods so far include extensions of the finite difference method (FDM)~\cite{LinX07, LiuAT04, MeeST06, MeeT04, SunW06, TadMS06, TadM07, WanB12, WanW11, ZhaSW11, ZhoTD13, ZhaSH14} as well as of the finite element method (FEM)~\cite{Den08, ForXY11, JiaM11, LiaYTLWL16, WanYZ15, ZheLZ10}, and references therein.
However, since the fractional operators are non-local, then both FDM and FEM, which are local methods, would lose a big advantage that they enjoy for classical PDEs with locally defined derivatives.  In contrast, global methods, such as the spectral method, could  play an important role in developing efficient and highly accurate numerical discretizations for FDEs, see \cite{LiZL12, LiX09, LiX10, TiaDW14, XuH14, MaoCS16, MaoS16, ZenLLBTA14, ZayK14} and references therein.
It is well-known that the solutions of FDEs exhibit end-point singularities even with smooth source terms. This causes
additional difficulties in developing high accuracy methods for FDEs.
To resolve this issue, an efficient and highly accurate spectral method was proposed in \cite{ZayK13} by using poly-fractonomials, which are eigenfunctions of a fractional Sturm-Liouville operator, as basis functions, leading to sparse matrices for simple model equations (linear equation without reaction term) of \textit{constant order}; a rigorous error analysis was established in \cite{CheSW15} showing that the error only depends on the regularity of the right-hand function. This idea was also extended to Riesz FDEs~\cite{MaoCS16} and general two-sided FDEs~\cite{MaoK17}.
However, such approach may  fail for solving more complex FDEs, such as FDEs with a reaction term, \textit{variable-order} FDEs or \textit{nonlinear} problems, due to the fact that the singular behavior of the end points is hard to match with one single set of basis functions.

To this end, a more flexible method, which combines domain decomposition with spectral expansions, namely the spectral element method (SEM, also called multi-domain spectral method), was introduced. For instance, Mao and Shen constructed a high accuracy SEM based on the geometric mesh showing that the error has exponential decay with respect to the square root of the number of degrees of freedom without prior knowledge about the singular behavior~\cite{MaoS17}. Some other SEMs can be found in \cite{ZayK14a, ZayK14b, KhaZK17}.
However, the aforementioned SEMs are based on Galerkin or Petrov-Galerkin type formulations, which are not efficient in dealing with \textit{nonlinear} problems.  Moreover, they cannot handle FDEs of \textit{variable-order}. In contrast, the collocation type method does not suffer from  such issues, and is particularly suitable for the \textit{variable-order} fractional problem.
In \cite{ZenZK15,ZenMK17}, the authors developed a generalized spectral collocation method with tunable accuracy for FDEs of \textit{variable-order} by using the weighted Jacobi polynomials. Although effective, the collocation method relies on empirically tuning the basis functions to capture the singular terms and cannot target special discretization strategies, e.g. exploiting the strength of graded or geometric meshes.
%Spectral or collocation methods using fractional polynomials rather than polynomials as basis functions give a promising way to develop efficient algorithm for numerically solving FDEs and even fractional operator-related problems. There are much theoretical and practical efforts involved in publications such as \cite{ZenZK15, ZenMK17, MaoK17, MaoCS16, ZayK13, ZayK14, ZayK15, HuaJWZ16, JiaWH16, ZhaZK15}.
%
%For instance, a multi-domain spectral penalty method is developed by Xu and Hesthaven \cite{XuH14} for FDEs in the Caputo sense and a multi-domain spectral Petrov-Galerkin method was proposed in \cite{ZayK14a} for time and spatial-fractional advection equations and \cite{ZayK14b} for fractional-order ordinary differential equations.  And also Petrov-Galerkin spectral element method are proposed by Kharazmi, Zayernouri and Karniadakis in \cite{KhaZK17} for fractional elliptic differential equations.

In this paper, we aim to develop a stable and highly accurate multi-domain spectral collocation method (MDSCM) for solving \textit{variable-order} nonlinear FDEs in the Riemann-Liouville sense. In particular, we employ Jacobi Lagrangian interpolants to represent the solution in each element. Directly applying the MDSCM to solve FDEs may lead to instabilities. Thus, we employ a penalty technique at the element interfaces by minimizing the jump in (integer) fluxes to stabilize the MDSCM. The penalty technique was also used previously in \cite{XuH14} for FDEs in the Caputo sense.
The main contributions of this work are as follows:
\begin{itemize}
\item We first construct a set of $C^0$ nodal basis functions and derive the corresponding variable-order differentiation matrix for the multi-domain discretization taking into account the nonlocal interactions. We also provide an efficient algorithm to compute the entries of the differentiation matrix by hybridizing the three-term recurrence relation and the Gauss quadrature.
\item We enforce the interface continuity conditions by matching the solution values at the element boundaries via the Lagrangian interpolants, and in addition we minimize the jump in (integer) fluxes using a penalty method. We also analyze numerically the effect of the penalty parameter on the condition number of the global differentiation matrix and the stability of the discretized scheme.
\item We demonstrate the effectiveness of the new method for the fractional Helmholtz equation of constant and variable-order using $h-p$ refinement for different values of the penalty parameter, and solve the fractional Burgers equation with constant and variable order, and compare with solutions obtained using a single-domain spectral collocation method.
\end{itemize}

The paper is organized as follows. In the next section, we present some definitions and properties for variable-order fractional integrals and derivatives. We propose the MDSCM in detail in section \ref{sec:MDSCM}. Moreover, in subsection \ref{sec:sta}, we discuss the eigenvalues of the multi-domain fractional differentiation matrix and introduce the penalty technique to stabilize the MDSCM. We present several numerical examples for the fractional Helmholtz equation and the fractional Burgers equation in section \ref{sec:num}. Finally, we conclude in section \ref{sec:conclusion}.

\section{Preliminaries}
%\subsection{Fractional integrals and derivatives}
In this section, we review definitions of variable-order fractional integrals and derivatives (\cite{ZenZK15, ZhuLAT09}).
\begin{define}For $\rho(x)>0$, the left and right fractional integrals in the sense of Riemann-Liouville are defined as
$$
{_{a}I_x^{\rho(x)}} v(x)=\frac{1}{\Gamma(\rho(x))}\int_{a}^x\frac{v(y)}{(x-y)^{1-\rho(x)}} dy,\quad x\in[a,b],
$$
and
$$
_{x}I_b^{\rho(x)} v(x)=\frac{1}{\Gamma(\rho(x))}\int_{x}^{b}\frac{v(y)}{(y-x)^{1-\rho(x)}}dy,\quad x\in[a,b],
$$
respectively, where $\Gamma(\cdot)$ is the Euler's Gamma function.
\end{define}
\begin{define}For $k-1<\rho(x)<k$ with $k\in\mathbb{N}$, the left and right Riemann-Liouville derivative of order $\rho(x)$ are defined  as
$$
{_a}D_x^{\rho(x)} v(x)=\frac{1}{\Gamma(k-\rho(x))}\left[\frac{d^k}{d\xi^k}\int_{a}^\xi\frac{v(y)}{(\xi-y)^{\rho(x)-k+1}}dy\right]_{\xi=x},
$$
and
$$
{_x}D_b^\rho(x) v(x)=\frac{(-1)^k}{\Gamma(k-\rho(x))}\left[\frac{d^k}{d\xi^k}\int_{\xi}^{b}\frac{v(y)}{(y-\xi)^{\rho(x)-k+1}}dy\right]_{\xi=x}
$$
for $x\in[a,b]$, respectively.
\end{define}

%\begin{define}For $k-1<\rho(x)<k$ with $k\in\mathbb{N}$, the left and right Caputo derivative of order $\rho(x)$ are defined respectively as
%\begin{equation}\label{D_cd_lv}
%^{C}_{a}D_x{^\rho} v(x)=\frac{1}{\Gamma(k-\rho(x))}\int_{a}^x\frac{v^{(k)}(y)}{(x-y)^{\rho(x)-k+1}}dy,\quad x\in[a,b],
%\end{equation}
%and
%\begin{equation}\label{D_cd_rv}
%^{C}_{x}D_b^{\rho(x)} v(x)=\frac{(-1)^k}{\Gamma(k-\rho(x))}\int_{x}^{b}\frac{v^{(k)}(y)}{(y-x)^{\rho(x)-k+1}}dy,\quad x\in[a,b].
%\end{equation}
%\end{define}
%
%By letting $\rho(x)\equiv\rho$ be a constant, we recover the standard fractional integrals and derivatives which can be found in many publications, see \cite{Pod99} for example.
%There is a close relation between the Caputo and the Riemann-Liouville fractional-order derivative.
%\begin{equation}\label{crl-r}
%{_a}D_x^{\rho(x)}u(x)={_a^C}D_x^{\rho(x)}u(x)+\sum_{j=0}^{k-1}\frac{u^{(j)}(a)}{\Gamma(1+j-\rho(x))}(x-a)^{j-\rho},
%\end{equation}

Similar to the constant-order case, there exist some well-known properties for the variable-order case, for instance,  if $n>\rho(x)$ or $n-\rho(x)\notin \mathbb{Z}$,
$$
{_a}D_x^{\rho(x)}(x-a)^n=\frac{\Gamma(n+1)}{\Gamma(n+1-\rho(x))}(x-a)^{n-\rho(x)}.
$$
For $k-1<\alf<k$ with $k\in\mathbb{N}$, let us introduce the following notation:
$$
\widetilde{D}_{a}^{b,\rho(x)}u(x)=\frac{1}{\Gamma(k-\rho(x))}\left[\frac{d^k}{d\xi^k}\int_{a}^b\frac{u(y)}{(\xi-y)^{\rho(x)-k+1}}dy\right]_{\xi=x}, \quad x>b.
$$
It can be verified that
$${_a}D_x^{\rho(x)}u(x)={_b}D_x^{\rho(x)}u(x) + \widetilde{D}_{a}^{b,\rho(x)}u(x), \quad a<b<x.  $$

\section{Multi-domain  fractional differentiation matrix of variable-order}\label{sec:MDSCM}
We introduce in this section the multi-domain fractional differentiation matrix (MDFDM) of variable-order and provide an efficient algorithm to compute its entries. We also present how to minimize the jump in the (integer) fluxes using a penalty method.

\subsection{Multi-domain  fractional differentiation matrix of variable-order}
Let $\Lambda : = (x_L, x_R)$, we first divide the interval $\Lambda$ into $M$ elements, i.e.,
$$
x_L=x_0<x_1<\cdots<x_\M=x_R.
$$
Denote $I_k=[x_{k-1},x_{k}],k=1,\ldots,M$ the $k$-th element and $h_k=x_k-x_{k-1}$ the length of $I_k$.
Let $\mathbb{P}_\N^I$ be the collection of all algebraic polynomials defined on interval $I$ with degree at most $N$.
We now introduce the piecewise polynomial space
$$
\mathbb{V}_\N=\{v\in C(\Lambda): v|_{I_k}\in \mathbb{P}_{\N_k}^{I_k}\},
$$
where $N_k, k=1,\ldots,M$ are all positive integers.
For each $k=1,2,\ldots,M$, we select a set of collocation points in $I_k$, denoted by $\{x_j^{k}\}_{j=0}^{\N_k}$, satisfying $x_0^{k}=x_{k-1}$ and $x_{\N_k}^{k}=x_{k}$.
We collect all these points and denote
$$\mathbb{N}_o:=\{x_{j}^{k}: k=1,\ldots,M;j=0,\ldots,N_k\}.$$
The magnitude of $\mathbb{N}_o$ is $\sum_{k=1}^\M N_k+1$.

For $k=1,2,\ldots,M$ and $j=0,\ldots,N_k$, we denote by $L_{j,k}(x)$ the $j$-th Lagrange interpolation polynomial on element $I_k$ satisfying $L_{j,k}(x_i^k) = \delta_{ij}$. Let us first define a set of basis functions.
For the boundary points and interface points, the corresponding basis functions are given by
$$
\phi_0(x)=\left\{
             \begin{array}{ll}
              L_{0,1}(x), & \mbox{~if~}  x\in I_1, \\
               0, & \mbox{~otherwise,~}
             \end{array}
           \right.
\phi_\M(x)=\left\{
             \begin{array}{ll}
              L_{\N_\M,\M}(x), & \mbox{~if~}  x\in I_\M, \\
               0, & \mbox{~otherwise,~}
             \end{array}
           \right.
$$
and
$$
\phi_k(x)=\left\{
             \begin{array}{ll}
              L_{\N_{k},{k}}(x), & \mbox{~if~}  x\in I_k,\\
               L_{0,{k+1}}(x), & \mbox{~if~}  x\in I_{k+1},  \\
               0, & \mbox{~otherwise,}
             \end{array}
           \right.
$$
$k=1,\ldots,M-1$, respectively,
while  for the interior points of each element, the basis functions are given by
$$
\psi_j^k(x)=\left\{
             \begin{array}{ll}
              L_{j,k}(x), & \mbox{~if~}  x\in I_k, \\
               0, & \mbox{~otherwise,}
             \end{array}
           \right.
           k=1,\ldots,M,\; j=1,\ldots,N_k-1.
$$
%
% %%====  figure 1
% \begin{figure}[tb]
% \begin{minipage}[b]{0.5\textwidth}
% \centering
% \includegraphics[width=6.5cm,height=6.5cm]{nb_1_fig1a.eps}
% %(a) 4 interface points and their basis functions.
% %% label for entire figure
% \end{minipage}
% \begin{minipage}[b]{0.5\textwidth}
% \centering
% \includegraphics[width=7.5cm,height=6.5cm]{nb_1_fig1b.eps}
% %(b)6 interior points and their basis functions.
% \end{minipage}
% \caption{ Nodal basis functions with $M=3,N_1=N_2=N_3=3,a=0,b=3,I_1=[0,1], I_2=[1,2], I_3=[2,3],
% \{x_j^k\}_{j=0}^3$ is the shifted Gauss-Legendre-Lobatto points.}
% \label{fig1}
% \end{figure}
%
% We illustrate the nodes and their corresponding basis functions by a sample
% example in Fig.\ref{fig1}. In the example, we use three elements: $[0,1],[1,2]$ and $[1,2]$ and two inner points in each element which agree with the shifted Gauss-Legendre-Lobatto points. In Fig. \ref{fig1}, the solid dot '$\bullet$' and
% the circle '$\circ$' denote  the interface points(including the boundary points) and the inner points, respectively.
% It is observed that all basis functions are continous on the whole interval $[0,3]$ and have sharp point at $x=1$ or $x=2$ or both.
% %%==== end of figure 1
Therefore, we have
$$\mathbb{V}_\N=\mbox{span}\{\phi_k,k=0,\ldots,M\} \cup \mbox{span} \{\psi_j^k,k=1,\ldots,M;j=1,\ldots,N_k-1\}.$$
For $u_\N\in \mathbb{V}_\N$, it can be expanded as
\begin{equation}\label{exp-2}
u_\N(x)=\sum_{k=1}^\M \sum_{j=1}^{\N_k-1}u_\N(x_j^k)\psi_j^k(x)+\sum_{k=0}^\M u_\N(x_k)\phi_k(x).
\end{equation}
Taking the fractional derivative of order $\alf(x)$ and evaluating the values at all collocation points, we obtain the MDFDM of $\alf(x)$, denoted by $\mathbf{D}^{\alf}$, as follows
\begin{equation}\label{eqn:MDFDM}
\mathbf{D}^{\alf}=
\begin{bmatrix}
 \widehat{\mathbf{D}}^{11}  &  &   &  & \overline{\mathbf{D}}^{1}\\
\widehat{\mathbf{D}}^{21}  &  \widehat{\mathbf{D}}^{22} &   &  &  \overline{\mathbf{D}}^{2}\\
\vdots&\vdots&\ddots& & \vdots\\
\widehat{\mathbf{D}}^{\M1}  & \widehat{\mathbf{D}}^{\M2} & \cdots &   \widehat{\mathbf{D}}^{\M\M} & \overline{\mathbf{D}}^{\M}\\
\underline{\mathbf{D}}^{1}  & \underline{\mathbf{D}}^{2} & \cdots  &  \underline{\mathbf{D}}^{\M} & \widetilde{\mathbf{D}}
\end{bmatrix},
\end{equation}
where $\widehat{\mathbf{D}}^{ij}$ denotes the differentiation matrix associated with the inner points of the $i$-th and $j$-th elements given by
$$\widehat{\mathbf{D}}^{ij}=\left[{_{x_L}}D^{\alf(x_m^j)}_x\psi_n^i(x_m^j)\right]_{m,n=1,\ldots,\N_i-1},\; i,j=1,\ldots,M.$$
$\overline{\mathbf{D}}^{i}$ and $\underline{\mathbf{D}}^{i}$ denote the differentiation matrices associated with the inner points of the $i$-th element  and interface points given by $$\overline{\mathbf{D}}^{i}=\left[{_{x_L}}D^{\alf(x_m^i)}_x \phi_n(x_m^i)\right]_{m=1,\ldots,\N_i-1;n=1,\ldots,\M-1}$$
and
$$ \underline{\mathbf{D}}^{i}=\left[{_{x_L}}D^{\alf(x_m)}_x \psi_n^i(x_m)\right]_{m=1,\ldots,\M-1;n=1,\ldots,\N_i-1},i=1,\ldots,M,$$
respectively,
and $\widetilde{\mathbf{D}}$ denotes the differentiation matrix associated with the interface points given by
$$\widetilde{\mathbf{D}}=\left[{_{x_L}}D^{\alf(x_m)}_x \phi_n(x_m)\right]_{m,n=1,\ldots,\M-1}.$$
%Then the multi-domain fractional differentiation matrix has the block form(see fig \ref{fig2}).\\

We show the structure of the differentiation matrix $\mathbf{D}^{\alf}$  with $M=5,N_i=4,i=1,\ldots,5$ in Fig.~\ref{fig2};  the entries of the white blocks are zeros while the entries of colored blocks are nonzero. The entries with different colors are evaluated by different formulas proposed in the next subsection.
\begin{figure}[h]
\includegraphics[width=12cm]{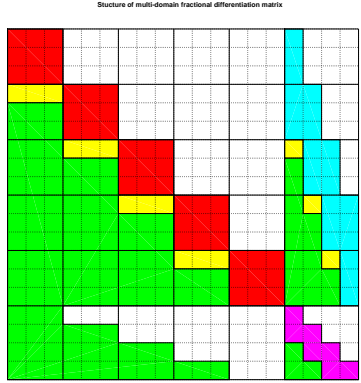}
\caption{Structure of multi-domain fractional differentiation matrix with $M=5,N_i=4,i=1,\ldots,5$.}
\label{fig2}
\end{figure}
All the nodal basis functions defined above are  $C[x_L,x_R]$. The following result shows that $\lim\limits_{x\rightarrow x_i+} {_{x_L}}D_x^{\alf}\phi_i(x)=\infty,\,i=1,\ldots,M-1$ even for a constant $\alpha > 1$. In our case, we only use values of the fractional derivatives of basis functions from left at the interface points, i.e., $\lim\limits_{x\rightarrow x_i-} {_{x_L}}D_x^{\alf}\phi_i(x),\,i=1,\ldots,M-1$, when $\alf>1.$
\begin{lemma}Let $\alf\in (0,2)$ be a constant, $a<c<b$. If $u\in C^2[a,c]\cap C^2[c,b]\cap C[a,b]$ and $u'(c^-)$ and $u'(c^+)$ exist, then we have
\begin{align*}
{_a}D_x^{\alf}u(x)=&\frac{(x-a)^{-\alf}}{\Gamma(1-\alf)}u(a)+\frac{(x-a)^{1-\alf}}{\Gamma(2-\alf)}u'(a)\\
&+\frac{(x-c)^{1-\alf}}{\Gamma(2-\alf)}\left[u'(c^+)-u'(c^-)\right]+s(x),
\end{align*}
for any $x\in (c,b)$, where
$$
s(x)=\left\{
             \begin{array}{ll}
             \frac{1}{\Gamma(3-\alf)}\frac{d}{dx}\left[\int_a^x u''(\tau)(x-\tau)^{2-\alf}d\tau\right], & \mbox{~if~}  \alf\in(0,1), \\
             \frac{1}{\Gamma(4-\alf)}\frac{d^2}{dx^2}\left[\int_a^x u''(\tau)(x-\tau)^{3-\alf}d\tau\right], & \mbox{~if~}  \alf\in(1,2).
             \end{array}
           \right.
$$
\end{lemma}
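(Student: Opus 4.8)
The plan is to prove the identity directly from the Riemann--Liouville definition by repeated integration by parts, treating the two ranges of $\alpha$ separately according to the integer $k$ with $k-1<\alpha<k$; thus $k=1$ for $\alpha\in(0,1)$ and $k=2$ for $\alpha\in(1,2)$. For definiteness take $\alpha\in(1,2)$, so that ${_a}D_x^{\alpha}u(x)=\frac{1}{\Gamma(2-\alpha)}\frac{d^2}{dx^2}g(x)$ with $g(x)=\int_a^x u(y)(x-y)^{1-\alpha}\,dy$. Since the kernel exponent $1-\alpha$ lies in $(-1,0)$, one cannot differentiate $g$ twice under the integral sign directly; the key device is to integrate by parts first to raise the kernel exponent, and only afterwards differentiate. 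I would write $(x-y)^{1-\alpha}=-\frac{1}{2-\alpha}\frac{d}{dy}(x-y)^{2-\alpha}$ and integrate by parts once. Because $u$ is continuous across $c$, no splitting is needed at this stage: the boundary term at $y=x$ vanishes (the exponent $2-\alpha$ is positive), and the boundary term at $y=a$ produces the $u(a)$ contribution, leaving $\frac{1}{2-\alpha}\int_a^x u'(y)(x-y)^{2-\alpha}\,dy$.

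The second integration by parts is where the kink enters. Writing $(x-y)^{2-\alpha}=-\frac{1}{3-\alpha}\frac{d}{dy}(x-y)^{3-\alpha}$ and transferring the derivative from $u'$ to $u''$, I would split $\int_a^x=\int_a^c+\int_c^x$ because $u'$ is only piecewise continuous with one-sided limits $u'(c^\pm)$. The interior boundary terms are $u'(c^-)(x-c)^{3-\alpha}$ from the left piece and $-u'(c^+)(x-c)^{3-\alpha}$ from the right piece, which combine into $-[u'(c^+)-u'(c^-)](x-c)^{3-\alpha}$; the terms at $y=a$ yield the $u'(a)$ contribution, and the leftover is $\frac{1}{(2-\alpha)(3-\alpha)}\int_a^x u''(y)(x-y)^{3-\alpha}\,dy$. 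At this point $g$ is expressed as a sum of smooth power functions (smooth for $x>c$, which is the stated range) plus an integral whose kernel has exponent $3-\alpha\in(1,2)$, so the remaining double differentiation in $x$ is legitimate by the Leibniz rule: at each differentiation the boundary contribution at $y=x$ vanishes because the exponents $3-\alpha$ and $2-\alpha$ stay positive, and the reduced exponents never drop to $-1$ since $u''$ is bounded on each subinterval by the $C^2$ hypotheses.

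Finally I would differentiate and simplify. The power terms give prefactors $(x-a)^{-\alpha}$, $(x-a)^{1-\alpha}$, and $(x-c)^{1-\alpha}$, and the recurrences $\Gamma(2-\alpha)=(1-\alpha)\Gamma(1-\alpha)$, $\Gamma(3-\alpha)=(2-\alpha)\Gamma(2-\alpha)$, $\Gamma(4-\alpha)=(3-\alpha)\Gamma(3-\alpha)$ collapse the accumulated constants into exactly the coefficients $1/\Gamma(1-\alpha)$ and $1/\Gamma(2-\alpha)$ appearing in the claim; the remaining integral term, kept with its two outer derivatives, is by definition $s(x)$ for $\alpha\in(1,2)$. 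The case $\alpha\in(0,1)$ is identical with one fewer integration by parts and one outer derivative, producing the same three boundary-type terms and the single-derivative form of $s(x)$. I expect the only delicate points to be justifying the interchange of differentiation and integration for the weakly singular kernels---handled precisely by raising the exponent through integration by parts \emph{before} differentiating---and carefully tracking the signs and one-sided limits of the interior boundary terms at $c$, which is the mechanism by which the jump $[u'(c^+)-u'(c^-)]$ is produced.
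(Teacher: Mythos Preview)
Your approach is essentially identical to the paper's: raise the kernel exponent by repeated integration by parts before applying the outer $d^k/dx^k$, and read off the jump contribution from the interior boundary terms at $y=c$. The paper carries out the case $\alpha\in(0,1)$ explicitly (splitting $\int_a^x=\int_a^c+\int_c^x$ from the outset and then noting that the $u(c^\pm)$ terms cancel by continuity), whereas you write out $\alpha\in(1,2)$ and postpone the split to the second integration by parts; these are cosmetic differences. One small slip: with $v(y)=-\tfrac{1}{3-\alpha}(x-y)^{3-\alpha}$ the boundary contribution at $y=c$ from $\int_a^c$ is $-\tfrac{1}{3-\alpha}u'(c^-)(x-c)^{3-\alpha}$ and from $\int_c^x$ it is $+\tfrac{1}{3-\alpha}u'(c^+)(x-c)^{3-\alpha}$, so the combination is $+\tfrac{1}{3-\alpha}\bigl[u'(c^+)-u'(c^-)\bigr](x-c)^{3-\alpha}$, not the negative you wrote; after the two $x$-derivatives this gives exactly the $+\frac{(x-c)^{1-\alpha}}{\Gamma(2-\alpha)}[u'(c^+)-u'(c^-)]$ in the statement.
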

\begin{proof}
For $\alf\in(0,1)$, using the integration by parts, we have
\begin{align*}
{_a}D_x^{\alf}u(x)=& \frac{1}{\Gamma(1-\alf)}\frac{d}{dx}\left[\int_a^c(x-\tau)^{-\alf}u(\tau)d\tau+\int_c^x(x-\tau)^{-\alf}u(\tau)d\tau\right]\\
=&\frac{1}{\Gamma(1-\alf)}\frac{d}{dx}\Big[\frac{(x-a)^{1-\alf}u(a)-(x-c)^{1-\alf}u(c^-)}{1-\alf}\\
&+\frac{(x-a)^{2-\alf}u'(a)-(x-c)^{2-\alf}u'(c^-)}{(1-\alf)(2-\alf)}+
\frac{\int_a^c(x-\tau)^{2-\alf}u''(\tau)d\tau}{(1-\alf)(2-\alf)}\\
&+\frac{(x-c)^{1-\alf}u(c^+)}{1-\alf}
+\frac{(x-c)^{2-\alf}u'(c^+)}{(1-\alf)(2-\alf)}+
\frac{\int_a^c(x-\tau)^{2-\alf}u''(\tau)d\tau}{(1-\alf)(2-\alf)}\Big].\\	
\end{align*}
Then, the result follows from $u(c^-)=u(c^+)$ for $\alpha\in (0,1)$. For $\alf\in(1,2)$, we obtain the result by using a similar argument.
\end{proof}

%We see from Lemma 2.1 that if $\alf>1$ and $u'(c-)\neq u'(c+)$, then $\lim\limits_{x\rightarrow c+}{_a}D_x^{\alf}u(x)=\infty.$

\subsection{Computing the differentiation matrix}
We now present how to compute the entries of the MDFDM, i.e., how to perform the computation of ${_{x_L}}D^{\alf(x)}_x \phi_k(x)$, $k=0,1,\ldots,M$ and ${_{x_L}}D^{\alf(x)}_x \psi_j^k(x)$ for $k=1,2,\dots,M,\, j=1,2,\ldots,N_k-1,\; x\in \mathbb{N}_o.$

By the definition of nodal basis functions, we have for $k=1,\ldots,M-1$,
$${_{x_L}}D^{\alf(x)}_x \phi_k(x)=\left\{
             \begin{array}{ll}
             0, & \mbox{~if~} x\leq x_{k-1},\\
              {_{x_{k-1}}}D^{\alf(x)}_x L_{N_k,k}(x), & \mbox{~if~}  x_{k-1}<x\leq x_k, \\
              \widetilde{D}_{x_{k-1}}^{x_k,\alpha(x)}L_{N_k,k}(x)+{_{x_{k}}}D^{\alf(x)}_x L_{0,k+1}(x), & \mbox{~if~} x_k<x\leq x_{k+1},\\
              \widetilde{D}_{x_{k-1}}^{x_k,\alpha(x)}L_{N_k,k}(x)+\widetilde{D}_{x_{k}}^{x_{k+1,\alpha(x)}}L_{0,k+1}(x), & \mbox{~if~} x>x_{k+1},\\
             \end{array}
           \right.
$$
and
\begin{align*}
  {_{x_L}}D^{\alf(x)}_x \phi_0(x) &= \left\{
             \begin{array}{ll}
              {_{x_{0}}}D^{\alf(x)}_x L_{0,1}(x), & \mbox{~if~}  x_0<x\leq x_1, \\
              \widetilde{D}_{x_{0}}^{x_1,\alpha(x)}L_{0,1}(x), & \mbox{~if~} x>x_1,
             \end{array}
           \right. \\
  {_{x_L}}D^{\alf(x)}_x \phi_M(x) &= \left\{
             \begin{array}{ll}
              0, & \mbox{~if~}  x\leq  x_{M-1}, \\
              {_{x_{M-1}}}D^{\alf(x)}_xL_{N_M,M}(x), & \mbox{~if~} x_{M-1} <x\leq x_M.
             \end{array}
           \right.
\end{align*}
%$${_{x_L}}D^{\alf(x)}_x \phi_0(x)=\left\{
%             \begin{array}{ll}
%              {_{x_{0}}}D^{\alf(x)}_x L_{0,1}(x), & \mbox{~if~}  x_0<x\leq x_1, \\
%              \widetilde{D}_{x_{0}}^{x_1,\alpha(x)}L_{0,1}(x), & \mbox{~if~} x>x_1.
%             \end{array}
%           \right.
%$$
%$${_{x_L}}D^{\alf(x)}_x \phi_M(x)=\left\{
%             \begin{array}{ll}
%              0, & \mbox{~if~}  x\leq  x_{M-1}, \\
%              {_{x_{M-1}}}D^{\alf(x)}_xL_{N_M,M}(x), & \mbox{~if~} x_{M-1} <x\leq x_M.
%             \end{array}
%           \right.
%$$
Moreover, for $k=1,\ldots,M$ and $j=1,\ldots,N_k$, we have
$${_{x_L}}D^{\alf(x)}_x \psi_j^k(x)=\left\{
             \begin{array}{ll}
             0, & \mbox{~if~} x\leq x_{k-1},\\
              {_{x_{k-1}}}D^{\alf(x)}_x L_{j,k}(x), & \mbox{~if~}  x_{k-1}<x\leq x_k, \\
              \widetilde{D}_{x_{k-1}}^{x_k,\alpha(x)}L_{j,k}(x), & \mbox{~if~} x>x_k.
             \end{array}
           \right.
$$
Overall, we observe that we shall compute the following two types of integral:
$$
{_{x_{k-1}} D^{\alpha(x)}_x} L_{j,k}(x),~ x\in(x_{k-1},x_k]\quad\mbox{and}\quad\widetilde{D}_{x_{k-1}}^{x_k,\alpha(x)}L_{j,k}(x),\quad x>x_k.
$$
%Now consider the reference interval $[-1,1]$.
By using the transformation $x=\frac{h_k}{2}(y+1)+x_{k-1}\in I_k$,
% we have the collocation points $\{x_j^k\}_{j=0}^{\N_k}$ are changed into $\{y_j^k\}_{j=0}^{\N_k}$ by
% $$y_j^k=\frac{2}{h_k}(x_j^k-x_{k-1})-1,$$
% and $\alf(x)$ into $\hat{\alf}(y)$,
% and Lagrange polynomial $L_{j,k}(x)$ on $I_k$ with respect to nodes
% $\{x_j^k\}_{j=0}^{N_k}$ into $\overline{L}_{j,k}(y)$ on $[-1,1]$ defined by
% $$\overline{L}_{j,k}(y)= L_{j,k}\left(\frac{h_k}{2}(y+1)+x_{k-1}\right)$$
we arrive at
\begin{equation}\label{tran_fd}
{_{x_{k-1}} D^{\alpha(x)}_x} L_{j,k}(x)=\left(\frac{2}{h_k}\right)^{\hat{\alpha}(y)} {_{-1} D^{\hat{\alpha}(y)}_y} {L}_{j,k}(y)
\end{equation}
and
\begin{equation}\label{tran_fd_ov}
{\widetilde{D}^{x_{k},{\alpha(x)}}_{x_{k-1}}} L_{j,k}(x)=\left(\frac{2}{h_k}\right)^{\hat{\alpha}(y)} {\widetilde{D}^{1,{\hat{\alpha}(y)}}_{-1}} {L}_{j,k}(y),
\end{equation}

For the sake of simplicity, we drop the index $k$ and let $L_j(y)$ be the $j$-th Lagrange interpolation polynomial associated with nodes $\{y_i\}_{i=0}^\N\in [-1,1]$.
%We do not distinct $\hat{\alf}$ from $\alf$ in the following.
%
The following results play an important role in \textit{efficiently} computing the above two types of integral. The first one
can be obtained by replacing $\alf$ with $k-\alf$ in the equation (3.9) of \cite{LiZL12}.
%and the second one  can be obtained similar as in \cite{CheXH15}.
%
\begin{theorem}
For $k-1<\alf(y)<k,\; k\in\mathbb{N}$, let
\begin{equation*}
\hat{R}_j^{c,d,\alf(y)}(y):=\frac{1}{\Gamma(k-\alf(y))}\int_{-1}^y\frac{P_j^{c,d}(s)}{(y-s)^{\alf(y)-k+1}}ds,\quad y\in [-1,1].
\end{equation*}
Then, $\{\hat{R}_j^{c,d,\alf(y)}(y)\}_{j=0}^\N$ satisfies the following three-term recurrence relation:
\begin{align*}
\hat{R}_0^{c,d,\alf(y)}(y)=&\frac{(y+1)^{k-\alf(y)}}{\Gamma(k-\alf(y)+1)},\\
\hat{R}_1^{c,d,\alf(y)}(y)=&
P^{c,d}_1(-1)\frac{(y+1)^{k-\alf(y)}}{\Gamma(k-\alf(y)+1)}+
\frac{c+d+2}{2}\frac{(y+1)^{k-\alf(y)+1}}{\Gamma(k-\alf(y)+2)},\\
 \hat{R}_{j+1}^{c,d,\alf(y)}(y)=&\left(\widetilde{A}_j^{c,d,\alf(y)}y-\widetilde{B}_j^{c,d,\alf(y)}\right) \hat{R}_{j}^{c,d,\alf(y)}(y)\\
 & -\widetilde{C}_j^{c,d,\alf(y)}\hat{R}_{j-1}^{c,d,\alf(y)}(y)+\frac{\widetilde{D}_j^{c,d,\alf(y)}}{\Gamma(k-\alf(y))}(y+1)^{k-\alf(y)},
\end{align*}
for $j\geq1$, where
\begin{equation}\label{eqn:ABCD}
\begin{aligned}
 &\widetilde{A}_j^{c,d,\alf(y)}=\frac{A_j^{c,d}}{1+(k-\alf(y)) A_j^{c,d}\widehat{C}_j^{c,d}},\\
 &\widetilde{B}_j^{c,d,\alf(y)}=\frac{B_j^{c,d}+(k-\alf(y)) A_j^{c,d}\widehat{B}_j^{c,d}}{1+(k-\alf(y)) A_j^{c,d}\widehat{C}_j^{c,d}},\\
 &\widetilde{C}_j^{c,d,\alf(y)}=\frac{C_j^{c,d}+(k-\alf(y)) A^{c,d}_j\widehat{A}_j^{c,d}}{1+(k-\alf(y)) A_j^{c,d}\widehat{C}_j^{c,d}},\\
 &\widetilde{D}_j^{c,d,\alf(y)}=\frac{A_j^{c,d}\left(\widehat{A}_j^{c,d}P_{j-1}^{c,d}(-1)+\widehat{B}_j^{c,d}P_j^{c,d}(-1)
+\widehat{C}_j^{c,d}P_{j+1}^{c,d}(-1)\right)}{1+(k-\alf(y)) A^{c,d}_j\widehat{C}_j^{c,d}},
\end{aligned}
\end{equation}
$A_j^{c,d},B_j^{c,d}$ and $C_j^{c,d}$ can be found in \cite[Equation (3.111)]{SheTW11}, $\widehat{A}_j^{c,d},\widehat{B}_j^{c,d}$
and $\widehat{C}_j^{c,d}$ can be found in \cite[Equation (3.124)]{SheTW11}.
\end{theorem}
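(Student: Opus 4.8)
The plan is to recognize $\hat R_j^{c,d,\alf(y)}$ as an ordinary (constant-order) Riemann--Liouville fractional integral. Writing $\mu:=k-\alf(y)\in(0,1)$, the definition is exactly $\hat R_j^{c,d,\alf(y)}(y)={_{-1}I_y^{\mu}}P_j^{c,d}(y)$, the left fractional integral of order $\mu$ based at $-1$. For each fixed evaluation point $y$ the exponent $\mu$ is a number: the kernel $(y-s)^{\alf(y)-k+1}$ carries the \emph{value} $\alf(y)$, not $\alf(s)$, so the order is frozen throughout the integration. Hence the asserted recurrence is a pointwise-in-$y$ identity in which $\alf(y)$ acts as a constant order, which is precisely why it follows from the constant-order statement of \cite{LiZL12} under $\alf\mapsto k-\alf(y)$. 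This reduces the whole proof to the constant-order recurrence for ${_{-1}I_y^{\mu}}P_j^{c,d}$. The two base cases are then immediate from linearity and the power rule ${_{-1}I_y^{\mu}}(s+1)^n=\frac{\Gamma(n+1)}{\Gamma(n+1+\mu)}(y+1)^{n+\mu}$ (the integration counterpart of the differentiation formula ${_a}D_x^{\rho}(x-a)^n=\frac{\Gamma(n+1)}{\Gamma(n+1-\rho)}(x-a)^{n-\rho}$ recalled in the Preliminaries): using $P_0^{c,d}\equiv1$ and $P_1^{c,d}(s)=P_1^{c,d}(-1)+\tfrac{c+d+2}{2}(s+1)$ reproduces the stated $\hat R_0$ and $\hat R_1$.

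For the inductive step I would apply ${_{-1}I_y^{\mu}}$ to the Jacobi three-term recurrence $P_{j+1}^{c,d}=(A_j^{c,d}y-B_j^{c,d})P_j^{c,d}-C_j^{c,d}P_{j-1}^{c,d}$ of \cite[Eq.~(3.111)]{SheTW11}. The only non-elementary term is ${_{-1}I_y^{\mu}}[s\,P_j^{c,d}(s)]$, which the commutation relation
\begin{equation*}
{_{-1}I_y^{\mu}}[s f(s)](y)=y\,{_{-1}I_y^{\mu}}[f](y)-\mu\,{_{-1}I_y^{\mu+1}}[f](y),
\end{equation*}
obtained by writing $s=y-(y-s)$ under the integral, turns into $y\,\hat R_j$ minus a multiple of the higher-order integral ${_{-1}I_y^{\mu+1}}P_j^{c,d}$.

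The crux is to express this last quantity back in the $\hat R$ family \emph{without} circularity. One must not re-use the three-term recurrence (that merely re-expands $s P_j^{c,d}$ and collapses the identity); instead I would use the derivative expansion $P_j^{c,d}=\widehat A_j^{c,d}(P_{j-1}^{c,d})'+\widehat B_j^{c,d}(P_j^{c,d})'+\widehat C_j^{c,d}(P_{j+1}^{c,d})'$ of \cite[Eq.~(3.124)]{SheTW11} together with the integration-by-parts identity
\begin{equation*}
{_{-1}I_y^{\mu+1}}[g'](y)={_{-1}I_y^{\mu}}[g](y)-\frac{(y+1)^{\mu}}{\Gamma(\mu+1)}\,g(-1).
\end{equation*}
Applying the latter to $g\in\{P_{j-1}^{c,d},P_j^{c,d},P_{j+1}^{c,d}\}$ gives
\begin{equation*}
{_{-1}I_y^{\mu+1}}P_j^{c,d}=\widehat A_j^{c,d}\hat R_{j-1}+\widehat B_j^{c,d}\hat R_j+\widehat C_j^{c,d}\hat R_{j+1}-\frac{(y+1)^{\mu}}{\Gamma(\mu+1)}S_j,
\end{equation*}
with $S_j:=\widehat A_j^{c,d}P_{j-1}^{c,d}(-1)+\widehat B_j^{c,d}P_j^{c,d}(-1)+\widehat C_j^{c,d}P_{j+1}^{c,d}(-1)$.

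Substituting this into the image of the Jacobi recurrence is then pure bookkeeping. The term $A_j^{c,d}y\,\hat R_j$ supplies the $y$-dependent coefficient multiplying $\hat R_j$, while $\hat R_{j+1}$ reappears on the right through the $\widehat C_j^{c,d}$ term with coefficient $-\mu A_j^{c,d}\widehat C_j^{c,d}$; moving it to the left produces exactly the denominator $1+(k-\alf(y))A_j^{c,d}\widehat C_j^{c,d}$ of \eqref{eqn:ABCD}. Reading off the coefficients of $\hat R_j$, $\hat R_{j-1}$ and $(y+1)^{\mu}$ yields $\widetilde A_j^{c,d,\alf(y)}$, $\widetilde B_j^{c,d,\alf(y)}$, $\widetilde C_j^{c,d,\alf(y)}$, and, using $\Gamma(\mu+1)=\mu\Gamma(\mu)$, identifies the inhomogeneous term with $\frac{\widetilde D_j^{c,d,\alf(y)}}{\Gamma(k-\alf(y))}(y+1)^{k-\alf(y)}$, where $\widetilde D_j^{c,d,\alf(y)}=A_j^{c,d}S_j/(1+\mu A_j^{c,d}\widehat C_j^{c,d})$. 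I expect the main obstacle to be precisely this non-circular reduction of ${_{-1}I_y^{\mu+1}}P_j^{c,d}$ and the careful tracking of the boundary values $g(-1)$ that generate the $(y+1)^{\mu}$ inhomogeneity; by contrast the variable-order layer is essentially free once the pointwise constant-order structure has been isolated.
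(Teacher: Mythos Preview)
Your proposal is correct and follows essentially the same approach as the paper: the paper gives no self-contained proof but simply observes that the result is obtained from \cite[Eq.~(3.9)]{LiZL12} by the substitution $\alf\mapsto k-\alf(y)$, which is precisely your reduction of $\hat R_j^{c,d,\alf(y)}$ to the constant-order fractional integral ${_{-1}I_y^{\mu}}P_j^{c,d}$ with $\mu=k-\alf(y)$. Your derivation then supplies the details behind that citation---the commutation identity for ${_{-1}I_y^{\mu}}[s f(s)]$, the use of \cite[Eq.~(3.124)]{SheTW11} together with integration by parts to eliminate ${_{-1}I_y^{\mu+1}}P_j^{c,d}$, and the algebra producing the coefficients \eqref{eqn:ABCD}---and these steps are sound.
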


The next theorem is an extension of \cite[Theorem 3.1]{CheXH15} for the {\it variable-order} case.

\begin{theorem}
For $k-1<\alf(y)<k$ and $k\in\mathbb{N}$, let
\begin{equation*}
\breve{R}_j^{c,d,\alf(y)}(y):=\frac{1}{\Gamma(k-\alf(y))}\int_{-1}^1\frac{P_j^{c,d}(s)}{(y-s)^{\alf(y)-k+1}}ds,\quad y>1.
\end{equation*}
Then, $\{\breve{R}_j^{c,d,\alf(y)}(y)\}_{j=0}^\N$ satisfies the following three-term recurrence relation:
\begin{align*}
\breve{R}_0^{c,d,\alf(y)}(y)=&\frac{(y+1)^{k-\alf(y)}-(y-1)^{k-\alf(y)}}{\Gamma(k-\alf(y)+1)},\\
\breve{R}_1^{c,d,\alf(y)}(y)=&
\frac{P^{c,d}_1(-1)(y+1)^{k-\alf(y)}-P^{c,d}_1(1)(y-1)^{k-\alf(y)}}{\Gamma(k-\alf(y)+1)}\\
& +\frac{(c+d+2)\left((y+1)^{k-\alf(y)+1}-(y-1)^{k-\alf(y)+1}\right)}{2\Gamma(k-\alf(y)+2)},\\
 \breve{R}_{j+1}^{c,d,\alf(y)}(y)=&\left(\widetilde{A}_j^{c,d,\alf(y)}y-\widetilde{B}_j^{c,d,\alf(y)}\right) \breve{R}_{j}^{c,d,\alf(y)}(y)-\widetilde{C}_j^{c,d,\alf(y)}\breve{R}_{j-1}^{c,d,\alf(y)}(y)\\
 & +\frac{\widetilde{D}_j^{c,d,\alf(y)}}{\Gamma(k-\alf(y))}(y+1)^{k-\alf(y)}-\frac{\widetilde{E}_j^{c,d,\alf(y)}}{\Gamma(k-\alf(y))}(y-1)^{k-\alf(y)},
\end{align*}
for $j\geq1$, where $\widetilde{A}_j^{c,d,\alf(y)}, \widetilde{B}_j^{c,d,\alf(y)},\widetilde{C}_j^{c,d,\alf(y)},\widetilde{D}_j^{c,d,\alf(y)} $ are given in \eqref{eqn:ABCD} and
\begin{equation}\label{eqn:E}
\widetilde{E}_j^{c,d,\alf(y)}=\frac{A_j^{c,d}\left(\widehat{A}_j^{c,d}P_{j-1}^{c,d}(1)+\widehat{B}_j^{c,d}P_j^{c,d}(1)
+\widehat{C}_j^{c,d}P_{j+1}^{c,d}(1)\right)}{1+(k-\alf(y)) A^{c,d}_j\widehat{C}_j^{c,d}}.
\end{equation}
\end{theorem}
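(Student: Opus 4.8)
The plan is to fix an arbitrary $y>1$ and observe that at this point $\alpha(y)$ is merely a constant; hence $\breve{R}_j^{c,d,\alpha(y)}(y)$ coincides with the constant-order object of \cite{CheXH15} evaluated at the single value $\alpha=\alpha(y)$. The whole derivation can therefore be carried out treating $\alpha$ as a fixed number, and the variable-order statement follows because $y$ was arbitrary and the resulting identities hold pointwise in $y$ with $\alpha$ replaced by $\alpha(y)$ throughout. The two base cases come from direct quadrature. For $j=0$ one uses $P_0^{c,d}\equiv 1$ and evaluates $\int_{-1}^1(y-s)^{k-\alpha-1}\,ds$, which immediately gives the stated $\breve{R}_0$. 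For $j=1$ one integrates by parts once (the derivative of the linear $P_1^{c,d}$ is the constant $(c+d+2)/2$), producing a boundary contribution involving $P_1^{c,d}(\pm1)$ and a remaining integral $\int_{-1}^1(y-s)^{k-\alpha}\,ds$; collecting Gamma factors via $(k-\alpha)\Gamma(k-\alpha)=\Gamma(k-\alpha+1)$ reproduces $\breve{R}_1$ exactly.

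For the recurrence at level $j\ge1$, I would start from the Jacobi three-term recurrence $P_{j+1}^{c,d}(s)=(A_j^{c,d}s-B_j^{c,d})P_j^{c,d}(s)-C_j^{c,d}P_{j-1}^{c,d}(s)$ and insert it into the integral defining $\Gamma(k-\alpha)\breve{R}_{j+1}$. The terms carrying $B_j^{c,d}$ and $C_j^{c,d}$ directly yield multiples of $\breve{R}_j$ and $\breve{R}_{j-1}$. The only nonroutine term is $A_j^{c,d}\int_{-1}^1 sP_j^{c,d}(s)(y-s)^{k-\alpha-1}\,ds$; writing $s=y-(y-s)$ splits it into $A_j^{c,d}y\,\Gamma(k-\alpha)\breve{R}_j$ and a leftover $-A_j^{c,d}\int_{-1}^1 P_j^{c,d}(s)(y-s)^{k-\alpha}\,ds$. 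To handle the leftover I would invoke the antiderivative identity \cite[Eq.~(3.124)]{SheTW11}, which gives a primitive of $P_j^{c,d}$ of the form $Q_j:=\widehat{A}_j^{c,d}P_{j-1}^{c,d}+\widehat{B}_j^{c,d}P_j^{c,d}+\widehat{C}_j^{c,d}P_{j+1}^{c,d}$, and integrate by parts. This produces boundary contributions $Q_j(1)(y-1)^{k-\alpha}-Q_j(-1)(y+1)^{k-\alpha}$ together with the integral $(k-\alpha)\int_{-1}^1 Q_j(s)(y-s)^{k-\alpha-1}\,ds=\Gamma(k-\alpha+1)\bigl(\widehat{A}_j^{c,d}\breve{R}_{j-1}+\widehat{B}_j^{c,d}\breve{R}_j+\widehat{C}_j^{c,d}\breve{R}_{j+1}\bigr)$.

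The decisive step, and the one I expect to be the main obstacle to present cleanly, is that the $\widehat{C}_j^{c,d}$ term reintroduces $\breve{R}_{j+1}$ on the right-hand side. Moving it to the left divides everything by the factor $1+(k-\alpha)A_j^{c,d}\widehat{C}_j^{c,d}$, which is precisely the common denominator appearing in \eqref{eqn:ABCD} and \eqref{eqn:E}; matching the coefficients of $y\breve{R}_j$, $\breve{R}_j$, $\breve{R}_{j-1}$, $(y+1)^{k-\alpha}$ and $(y-1)^{k-\alpha}$ then identifies $\widetilde{A}_j$, $\widetilde{B}_j$, $\widetilde{C}_j$, $\widetilde{D}_j$ and $\widetilde{E}_j$ respectively. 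I would stress that the genuinely new feature relative to the preceding theorem for $\hat{R}_j^{c,d,\alpha(y)}$ is the upper boundary term: there the integration runs over $[-1,y]$ and the upper endpoint factor $(y-y)^{k-\alpha}$ vanishes since $k-\alpha>0$, so only $\widetilde{D}_j$ survives, whereas here the domain is $[-1,1]$ with $y>1$, so $(y-1)^{k-\alpha}\neq0$ and the extra term $\widetilde{E}_j$ with the evaluations $P_\bullet^{c,d}(1)$ appears. Since none of these manipulations used constancy of $\alpha$ in an essential way, replacing $\alpha$ by $\alpha(y)$ everywhere yields the variable-order recurrence and completes the proof.
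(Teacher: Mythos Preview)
The paper does not actually supply a proof of this theorem; it merely introduces it as ``an extension of \cite[Theorem 3.1]{CheXH15} for the \textit{variable-order} case'' and then moves on. Your proposal reconstructs precisely the argument that reference would contain: freeze $y$ so that $\alpha(y)$ is a constant, verify $\breve{R}_0,\breve{R}_1$ by direct quadrature and one integration by parts, then for the step $j\to j+1$ insert the Jacobi three-term recurrence, split $s=y-(y-s)$ in the $A_j^{c,d}s$ term, integrate the leftover $\int_{-1}^1 P_j^{c,d}(s)(y-s)^{k-\alpha}\,ds$ by parts using the primitive formula \cite[Eq.~(3.124)]{SheTW11}, and finally clear the $\breve{R}_{j+1}$ that reappears on the right by dividing through $1+(k-\alpha)A_j^{c,d}\widehat{C}_j^{c,d}$. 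Your identification of the one genuinely new feature relative to $\hat{R}_j$---that the upper endpoint $s=1$ now contributes the nonvanishing boundary term $(y-1)^{k-\alpha}$, producing $\widetilde{E}_j$ with the values $P_\bullet^{c,d}(1)$---is exactly the point. The derivation is correct and matches the intended route.
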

%\textbf{Proof.}

We next compute the integer derivatives of $\hat{R}_j^{c,d,\alf(y)}(y)$ and $\breve{R}_j^{c,d,\alf(y)}(y)$ up to order $k$. To this end, we have the following results:
\begin{theorem}\label{them:3r:type1}
Let $k-1<\alf(y)<k,~1\leq m\leq k$ and $k,m\in\mathbb{N}$, we have
the following three-term recurrence relation: for $j\geq1$,
\begin{align*}
\frac{d^m}{dy^m}\hat{R}_0^{c,d,\alf(y)}(y) = &\frac{(y+1)^{k-\alf(y)-m}}{\Gamma(k-\alf(y)+1-m)},\\
\frac{d^m}{dy^m}\hat{R}_1^{c,d,\alf(y)}(y) =&
P^{c,d}_1(-1)\frac{(y+1)^{k-\alf(y)-m}}{\Gamma(k-\alf(y)+1-m)}+
\frac{c+d+2}{2}\frac{(y+1)^{k-\alf(y)+1-m}}{\Gamma(k-\alf(y)+2-m)},\\
\frac{d^m}{dy^m} \hat{R}_{j+1}^{c,d,\alf(y)}(y) =& \left(\widetilde{A}_j^{c,d,\alf(y)}y-\widetilde{B}_j^{c,d,\alf(y)}\right) \frac{d^m}{dy^m}\hat{R}_{j}^{c,d,\alf(y)}(y)\\
& -\widetilde{C}_j^{c,d,\alf(y)}\frac{d^m}{dy^m}\hat{R}_{j-1}^{c,d,\alf(y)}(y) +m\widetilde{A}_j^{c,d,\alf(y)}\frac{d^{m-1}}{dy^{m-1}}\hat{R}_{j}^{c,d,\alf(y)}(y)\\
& +\frac{(k-\alf(y))\widetilde{D}_j^{c,d,\alf(y)}}{\Gamma(k-\alf(y)+1-m)}(y+1)^{k-\alf(y)-m},
\end{align*}
where $\widetilde{A}_j^{c,d,\alf(y)}, \widetilde{B}_j^{c,d,\alf(y)},\widetilde{C}_j^{c,d,\alf(y)},\widetilde{D}_j^{c,d,\alf(y)} $ are given in \eqref{eqn:ABCD}.
\end{theorem}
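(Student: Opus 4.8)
The plan is to obtain the stated relation by differentiating the three-term recurrence for $\{\hat{R}_j^{c,d,\alf(y)}\}$ established above exactly $m$ times with respect to $y$, so that essentially no new induction is required beyond what that recurrence already supplies. The one conceptual point that must be settled at the outset is the meaning of $\frac{d^m}{dy^m}$ acting on an object that carries the variable order $\alf(y)$. Following the definition of the Riemann--Liouville variable-order derivative, the order is \emph{frozen} at the evaluation point: the inner integer differentiation is performed with $\alf$ held as a constant parameter, and only afterwards is $\alf$ set equal to $\alf(y)$. Under this reading, both $\alf$ and the coefficients $\widetilde{A}_j^{c,d,\alf(y)},\widetilde{B}_j^{c,d,\alf(y)},\widetilde{C}_j^{c,d,\alf(y)},\widetilde{D}_j^{c,d,\alf(y)}$ (which depend on $y$ only through $\alf$) are treated as constants for the purpose of $\frac{d^m}{dy^m}$, and $\hat{R}_j^{c,d,\alf(y)}(y)$ is differentiated only through the explicit $y$-dependence of its integrand. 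I would state this frozen-order convention explicitly before beginning the computation.

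The base cases $j=0,1$ then reduce to differentiating powers of $(y+1)$. Using the elementary identity
$$
\frac{d^m}{dy^m}(y+1)^{\beta}=\frac{\Gamma(\beta+1)}{\Gamma(\beta+1-m)}(y+1)^{\beta-m},
$$
applied with $\beta=k-\alf(y)$ and $\beta=k-\alf(y)+1$, immediately produces the stated formulas for $\frac{d^m}{dy^m}\hat{R}_0^{c,d,\alf(y)}$ and $\frac{d^m}{dy^m}\hat{R}_1^{c,d,\alf(y)}$, with the Gamma-function prefactors cancelling exactly as in the undifferentiated base identities.

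For the recurrence proper ($j\ge 1$), I would apply $\frac{d^m}{dy^m}$ to both sides of the recurrence for $\hat{R}_{j+1}^{c,d,\alf(y)}$. The terms $-\widetilde{C}_j^{c,d,\alf(y)}\hat{R}_{j-1}^{c,d,\alf(y)}$ and $\frac{\widetilde{D}_j^{c,d,\alf(y)}}{\Gamma(k-\alf(y))}(y+1)^{k-\alf(y)}$ differentiate termwise; for the last one the power identity with $\beta=k-\alf(y)$ together with $\Gamma(k-\alf(y)+1)/\Gamma(k-\alf(y))=k-\alf(y)$ yields the coefficient $(k-\alf(y))\widetilde{D}_j^{c,d,\alf(y)}/\Gamma(k-\alf(y)+1-m)$. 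The only term requiring the product rule is $(\widetilde{A}_j^{c,d,\alf(y)} y-\widetilde{B}_j^{c,d,\alf(y)})\hat{R}_j^{c,d,\alf(y)}$; since the first factor is affine in $y$, its derivatives of order $\ge 2$ vanish, so the Leibniz expansion collapses to the two surviving terms
$$
(\widetilde{A}_j^{c,d,\alf(y)} y-\widetilde{B}_j^{c,d,\alf(y)})\frac{d^m}{dy^m}\hat{R}_j^{c,d,\alf(y)}+m\,\widetilde{A}_j^{c,d,\alf(y)}\frac{d^{m-1}}{dy^{m-1}}\hat{R}_j^{c,d,\alf(y)},
$$
which is precisely the mixed-order term appearing in the claim. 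Collecting the pieces reproduces the asserted relation.

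I expect the main obstacle to be conceptual rather than computational: making the frozen-order interpretation precise and confirming that it is consistent with the earlier definition of ${_a}D_x^{\alf(x)}$, so that the $m=k$ instance of this recurrence genuinely assembles the variable-order fractional derivative used in the differentiation matrix. A secondary point worth a remark is the regularity needed to differentiate $m$ times: the representation is valid on $(-1,1]$, and the singular factor $(y+1)^{k-\alf(y)-m}$ shows that smoothness degrades at $y=-1$ when $k-\alf(y)-m<0$, so the identity should be read on the open interval, or at the collocation nodes used in the scheme, matching how the entries of the differentiation matrix are actually evaluated.
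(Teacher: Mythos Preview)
Your approach is correct and is precisely the natural derivation: differentiate the recurrence of the preceding theorem $m$ times, use Leibniz on the single affine-times-$\hat R_j$ product (which truncates after two terms), and differentiate the power $(y+1)^{k-\alf}$ via the Gamma identity. The paper in fact states Theorem~\ref{them:3r:type1} without proof, so there is no alternative argument to compare against; your computation is exactly what the authors implicitly rely on. Your remark on the frozen-order convention is the right clarification and is consistent with how the paper defines ${_a}D_x^{\alf(x)}$ (the outer $d^k/d\xi^k$ acts with $\alf(x)$ held fixed and $\xi$ set to $x$ afterwards).
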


\begin{theorem}\label{them:3r:type2}
Let $k-1<\alf(y)<k,~1\leq m\leq k$ and $k,m\in\mathbb{N}$, we have
the following three-term recurrence relation: for $j\geq1$,
\begin{align*}
\frac{d^m}{dy^m}\breve{R}_0^{c,d,\alf(y)}(y) =& \frac{(y+1)^{k-\alf(y)-m}-(y-1)^{k-\alf(y)-m}}{\Gamma(k-\alf(y)+1-m)},\\
\frac{d^m}{dy^m}\breve{R}_1^{c,d,\alf(y)}(y) =&
\frac{P^{c,d}_1(-1)(y+1)^{k-\alf(y)-m}-P^{c,d}_1(1)(y-1)^{k-\alf(y)-m}}{\Gamma(k-\alf(y)+1-m)}\\
& + \frac{(c+d+2)\left((y+1)^{k-\alf(y)+1-m}-(y-1)^{k-\alf(y)+1-m}\right)}{2\Gamma(k-\alf(y)+2-m)},\\
\frac{d^m}{dy^m}\breve{R}_{j+1}^{c,d,\alf(y)}(y) = &\left(\widetilde{A}_j^{c,d,\alf(y)}y-\widetilde{B}_j^{c,d,\alf(y)}\right)
\frac{d^m}{dy^m}\breve{R}_{j}^{c,d,\alf(y)}(y)\\
& - \widetilde{C}_j^{c,d,\alf(y)}\frac{d^m}{dy^m}\breve{R}_{j-1}^{c,d,\alf(y)}(y) + m\widetilde{A}_j^{c,d,\alf(y)}\frac{d^{m-1}}{dy^{m-1}}\breve{R}_{j}^{c,d,\alf(y)}(y)\\
& + \frac{\widetilde{D}_j^{c,d,\alf(y)}(y+1)^{k-\alf(y)-m}-\widetilde{E}_j^{c,d,\alf(y)}(y-1)^{k-\alf(y)-m}}
{(k-\alf(y))^{-1}\Gamma(k-\alf(y)+1-m)},
\end{align*}
where $\widetilde{A}_j^{c,d,\alf(y)}, \widetilde{B}_j^{c,d,\alf(y)},\widetilde{C}_j^{c,d,\alf(y)},\widetilde{D}_j^{c,d,\alf(y)} $ are given in \eqref{eqn:ABCD} and $\widetilde{E}_j^{c,d,\alf(y)}$ is given in \eqref{eqn:E}.
\end{theorem}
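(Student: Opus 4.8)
The plan is to obtain the stated recurrence by differentiating, $m$ times with respect to $y$, the three-term recurrence for $\breve{R}_j^{c,d,\alf(y)}(y)$ established above. The convention to fix at the outset is that, in accordance with the definition of the variable-order Riemann--Liouville derivative (where the order is substituted only \emph{after} the integer differentiation is carried out), the exponent $\alf(y)$ is treated as a frozen parameter during the action of $\frac{d^m}{dy^m}$. Consequently every coefficient $\widetilde{A}_j^{c,d,\alf(y)}$, $\widetilde{B}_j^{c,d,\alf(y)}$, $\widetilde{C}_j^{c,d,\alf(y)}$, $\widetilde{D}_j^{c,d,\alf(y)}$, $\widetilde{E}_j^{c,d,\alf(y)}$ is constant with respect to $\frac{d}{dy}$, and the only $y$-dependence to be differentiated sits in the affine factor $\widetilde{A}_j^{c,d,\alf(y)}y-\widetilde{B}_j^{c,d,\alf(y)}$, in $\breve{R}_j$ and $\breve{R}_{j-1}$, and in the two power terms $(y\pm1)^{k-\alf(y)}$. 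Since these are all $C^\infty$ for $y>1$, term-by-term differentiation of the identity is justified, and the whole argument runs word-for-word parallel to that of Theorem~\ref{them:3r:type1}, the only new feature being the boundary terms in $(y-1)$ carrying the coefficient $\widetilde{E}_j^{c,d,\alf(y)}$.

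For the two base cases I would differentiate the closed forms for $\breve{R}_0$ and $\breve{R}_1$ directly, using the elementary identity $\frac{d^m}{dy^m}(y\pm1)^{\beta}=\frac{\Gamma(\beta+1)}{\Gamma(\beta+1-m)}(y\pm1)^{\beta-m}$ with $\beta=k-\alf(y)$ and $\beta=k-\alf(y)+1$. In the $\breve{R}_0$ case the prefactor $\Gamma(k-\alf+1)$ in the denominator cancels against the $\Gamma(\beta+1)=\Gamma(k-\alf+1)$ produced by differentiation, leaving the denominator $\Gamma(k-\alf+1-m)$ of the stated formula; the $\breve{R}_1$ case follows identically after splitting it into its $k-\alf$ and $k-\alf+1$ pieces.

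For the recurrence step ($j\geq1$) I would apply $\frac{d^m}{dy^m}$ to the right-hand side of the $\breve{R}_{j+1}$ recurrence. The product $(\widetilde{A}_j y-\widetilde{B}_j)\breve{R}_j$ is handled by the Leibniz rule $\sum_{i=0}^m\binom{m}{i}\frac{d^i}{dy^i}(\widetilde{A}_j y-\widetilde{B}_j)\,\frac{d^{m-i}}{dy^{m-i}}\breve{R}_j$; because the first factor is affine, only $i=0$ and $i=1$ survive, yielding $(\widetilde{A}_j y-\widetilde{B}_j)\frac{d^m}{dy^m}\breve{R}_j+m\widetilde{A}_j\frac{d^{m-1}}{dy^{m-1}}\breve{R}_j$, the second summand being precisely the extra term that distinguishes the derivative recurrence from the $m=0$ case. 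The term $-\widetilde{C}_j\breve{R}_{j-1}$ passes straight through as $-\widetilde{C}_j\frac{d^m}{dy^m}\breve{R}_{j-1}$. Differentiating $\frac{\widetilde{D}_j}{\Gamma(k-\alf)}(y+1)^{k-\alf}$ gives $\frac{\widetilde{D}_j}{\Gamma(k-\alf)}\cdot\frac{\Gamma(k-\alf+1)}{\Gamma(k-\alf+1-m)}(y+1)^{k-\alf-m}$, and the relation $\Gamma(k-\alf+1)=(k-\alf)\Gamma(k-\alf)$ collapses this to $\frac{(k-\alf)\widetilde{D}_j}{\Gamma(k-\alf+1-m)}(y+1)^{k-\alf-m}$, which is exactly the quantity written as $\frac{\widetilde{D}_j(y+1)^{k-\alf-m}}{(k-\alf)^{-1}\Gamma(k-\alf+1-m)}$ in the statement; the $\widetilde{E}_j(y-1)^{k-\alf}$ term is treated identically. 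Collecting these pieces reproduces the claimed recurrence.

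I do not expect a serious obstacle: the result is a direct consequence of the linearity of $\frac{d^m}{dy^m}$ applied to an identity already valid for every $y>1$. The only two points demanding care are (i) making explicit that $\alf(y)$ is frozen, so that the coefficients are genuinely constant and the variable-order computation reduces formally to the constant-order one, and (ii) the Gamma-function bookkeeping that converts the falling-factorial constants arising from repeated differentiation of $(y\pm1)^{k-\alf}$ into the $(k-\alf)^{-1}\Gamma(k-\alf+1-m)$ denominators of the statement.
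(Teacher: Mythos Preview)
Your proposal is correct and follows exactly the route the paper has in mind: the paper states Theorem~\ref{them:3r:type2} without proof, presenting it simply as the result of computing the integer derivatives of the recurrence for $\breve{R}_j^{c,d,\alf(y)}$, and your argument---differentiate the identity $m$ times, use Leibniz on the affine factor, and track the Gamma constants---is precisely how that computation goes. Your explicit remark that $\alf(y)$ is frozen during differentiation (consistent with the variable-order definition, where the order is fixed at the outer evaluation point and the $d^k/d\xi^k$ acts on the auxiliary variable) is a useful clarification that the paper leaves implicit.
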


We now show how to compute  ${_{-1} D^{\alpha}_y} L_{j}(y)$ and $\widetilde{D}^{1,{\alpha}}_{-1} L_{j}(y)$. To do this, we first expand $L_j(y)$ as
\begin{equation}\label{exL-J}
L_j(y)=\sum_{i=0}^\N l_i^j P_i^{c,d}(y),
\end{equation}
where
$$l_i^j = \left\{
\begin{array}{ll}
P_i^{c,d}(y_j)w_j/\gamma_i^{c,d}, \quad & i = 0,\ldots,N-1, \\
P_N^{c,d}(y_j)w_j/\left((2+\frac{c+d+1}{N})\gamma_i^{c,d}\right), \quad & i = N,
\end{array}
\right.
$$
where $\{y_j,w_j\},j = 0,\ldots,N$ are the nodes and weights of the Jacobi-Gauss-Lobatto quadrature and $\gamma_i^{c,d}$ can be found in \cite[Equation (3.109)]{SheTW11}. Then, we have
$$ {_{-1}D_y^\alf} L_j(y)=\sum_{i=0}^\N l_i^j {_{-1}D_y^\alf}P^{c,d}_i(y).$$
Hence, we only need to compute ${_{-1}D_y^\alf}P^{c,d}_i(y)$, which can be computed by using the three-term recurrence relation proposed in Theorem \ref{them:3r:type1}. The ``red-block" entries in Fig.~\ref{fig2} are evaluated by using this formula.

As for $\widetilde{D}_{-1}^{1,\alf}L_j(y)$, we apply the hybrid approach similar as in \cite{CheXH15}. In particular,
when $y$ is close to 1, we use the following formula
$$ \widetilde{D}^{1,{\alpha}}_{-1} L_j(y)=\sum_{i=0}^\N l_i^j \widetilde{D}^{1,{\alpha}}_{-1} P^{c,d}_i(y),$$
and $\widetilde{D}^{1,{\alpha}}_{-1} P^{c,d}_i(y),i = 0,\ldots,N$ are computed by using the three-term recurrence relation proposed in Theorem \ref{them:3r:type2}. The ``yellow-block" entries in Fig.~\ref{fig2} are evaluated by using this formula.
When $y$ is far away from 1, we use the following Jacobi-Gauss-type quadrature
$$\widetilde{D}_{-1}^{1,\alf}L_j(y)=\frac{1}{\Gamma(-\alf)}
\sum_{i=0}^\N\sum_{k=0}^L l_i^j\frac{P_i^{c,d}(\xi_{k,L})}{(y-\xi_{k,L})^{\alf+1}}\omega_{k,L}^{c,d},$$
where $\xi_{k,L}$ and $\omega_{k,L}$ are the Legendre-Gauss-type quadrature points and weights. The ``green-block" entries in Fig.~\ref{fig2} are evaluated by using this formula.

\subsection{Minimize the jump in the fluxes using a penalty method}\label{sec:sta}
Discontinuities of the (fractional or integer) fluxes of the nodal basis functions at the interfaces may lead to an unstable scheme when solving FDEs. Thus, we introduce a penalty parameter to stabilize the corresponding MDSCM by minimizing the jump in the integer fluxes.

To illustrate the possibility of the instability of the corresponding MDSCM, we show the distribution of eigenvalues of the MDFDM for different values of constant-order $\alf = 1.01,1.99$ with different $M,N$ using a {\it uniform mesh}. The other parameters are taken as $c=d=0, x_L=-1, x_R=1$.
We observe from Figs.~\ref{fig3}-\ref{fig4} that there exist eigenvalues whose real parts are positive, which would cause instability in the algorithm,  for time-dependent problems.

\begin{figure}[!t]


\centering
\includegraphics[width=0.46\textwidth,height=0.4\textwidth]{eigtau0_pfa101leg_ga}
\quad \;
\includegraphics[width=0.46\textwidth,height=0.4\textwidth]{eigtau0_pfa199leg_ga}
\caption{Eigenvalues of MDFDM $\mathbf{D}^{\alf}$ for $M = 8$ and different values of $N$ with the uniform mesh. Left: $\alf=1.01$, right: $\alf=1.99$.}
\label{fig3}
\end{figure}

\begin{figure}[!t]
\centering
\includegraphics[width=0.46\textwidth,height=0.4\textwidth]{eigtau0_hfa101leg_ga}
\quad \;
\includegraphics[width=0.46\textwidth,height=0.4\textwidth]{eigtau0_hfa199leg_ga}
\caption{Eigenvalues of MDFDM $\mathbf{D}^{\alf}$  for $N = 3$ and different values of $M$ with uniform mesh. Left: $\alf=1.01$, right: $\alf=1.99$.}
\label{fig4}
\end{figure}

%%%====  figure 3
%\begin{figure}[H]
%\begin{minipage}[b]{0.5\textwidth}
%\centering
%\includegraphics[width=5.0cm,height=5.0cm]{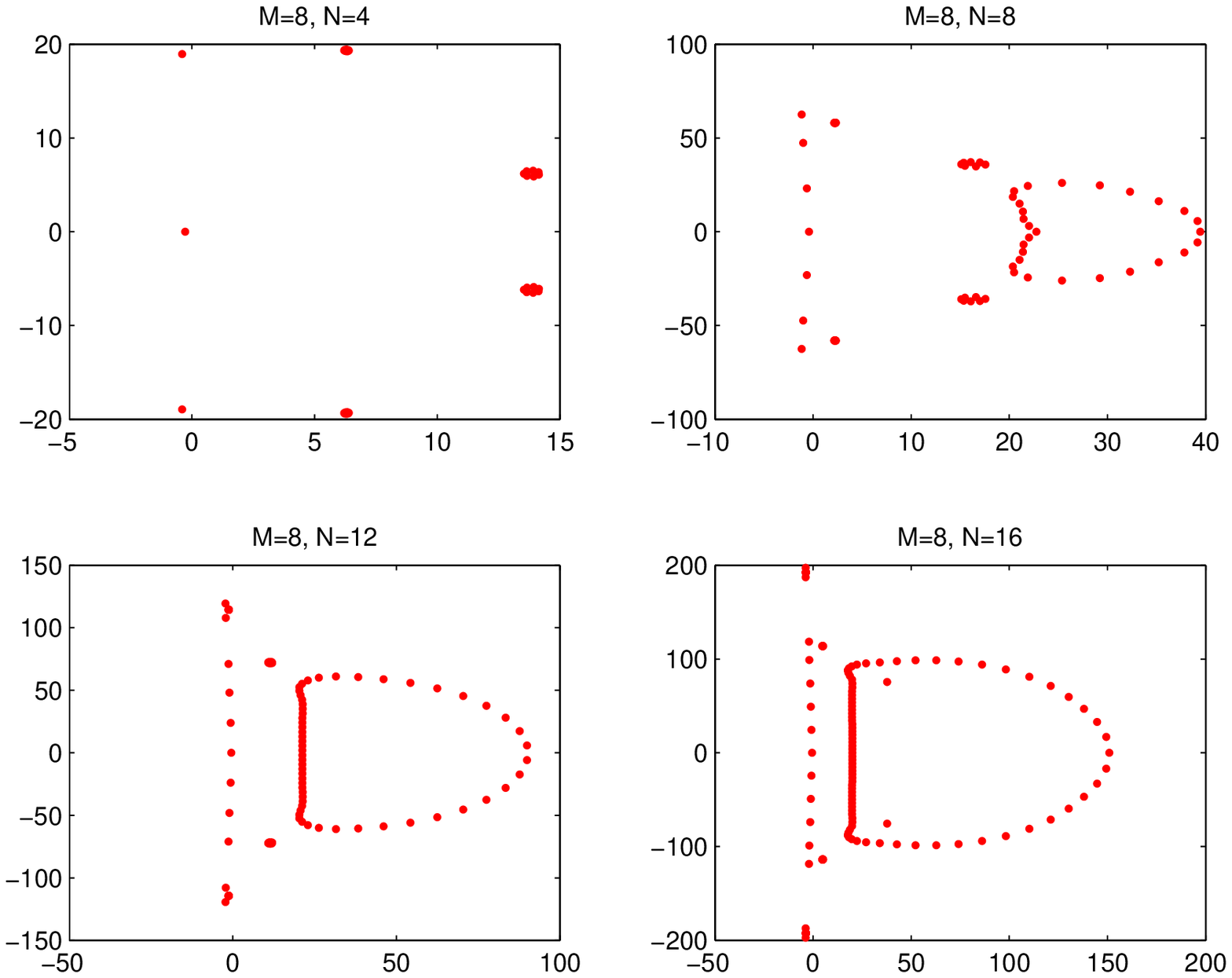}
%%
%%% label for entire figure
%\end{minipage}
%\begin{minipage}[b]{0.5\textwidth}
%\centering
%\includegraphics[width=5.0cm,height=5.0cm]{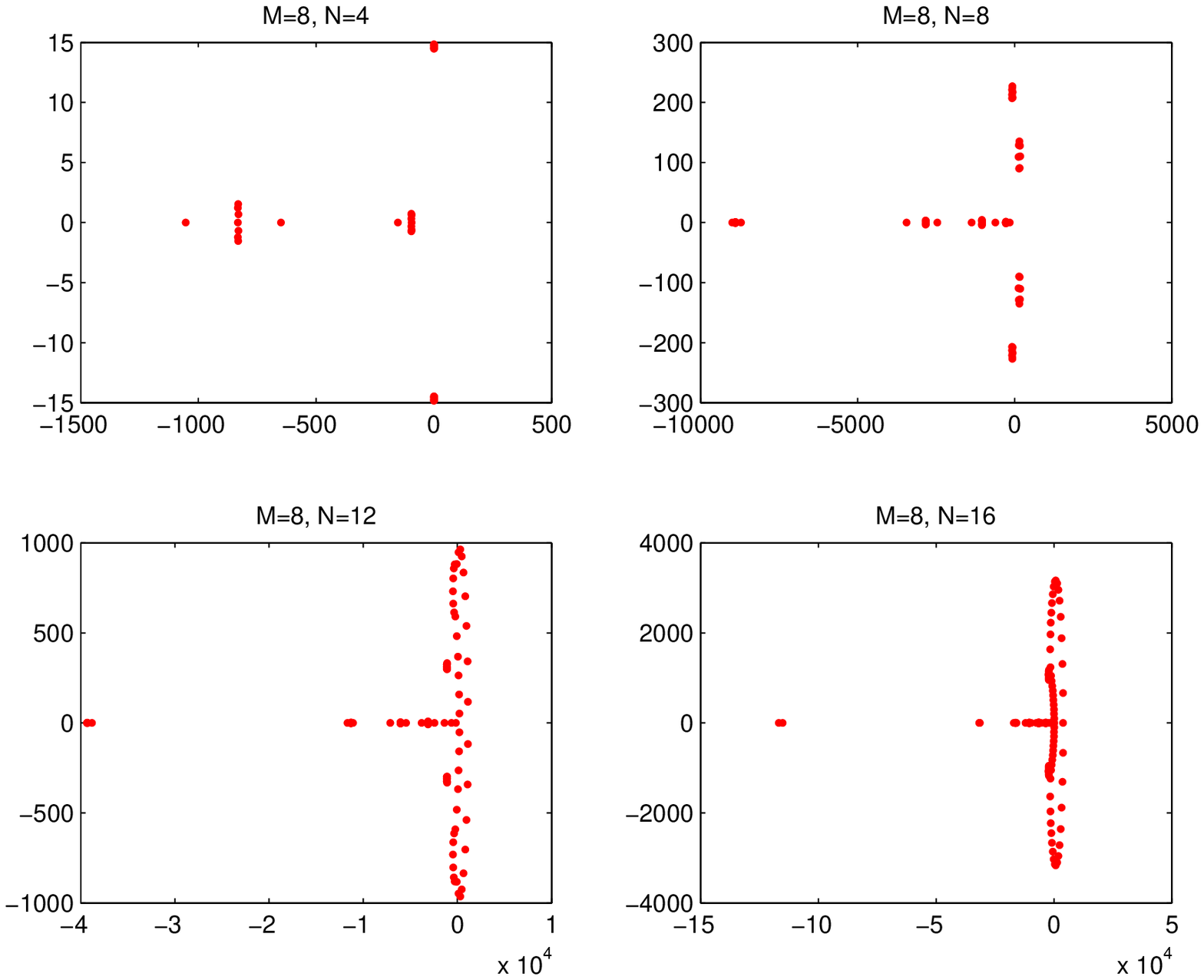}
%%
%\end{minipage}
%\caption{Eigenvalues of MDFDM $\mathbf{D}^{\alf}$ for $M = 8$ and different values of $N$ with the uniform mesh. Left: $\alf=1.01$, right: $\alf=1.99$.}
%\label{fig3}
%\end{figure}

%%%====  figure 4
%\begin{figure}[H]
%\begin{minipage}[b]{0.5\textwidth}
%\centering
%\includegraphics[width=6.2cm,height=6.cm]{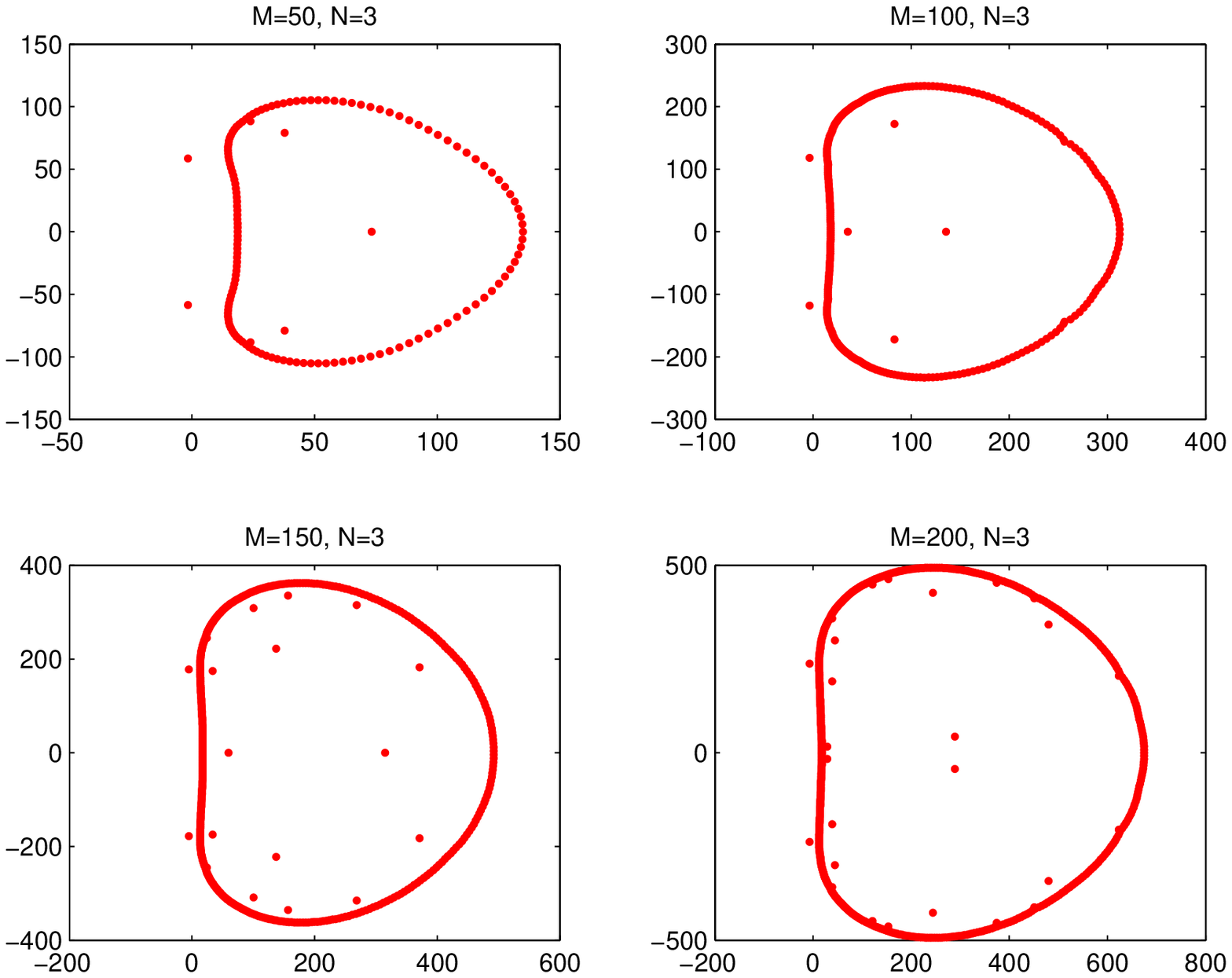}
%%(a) $\alf=1.01$.
%%% label for entire figure
%\end{minipage}
%\begin{minipage}[b]{0.5\textwidth}
%\centering
%\includegraphics[width=6.2cm,height=6.cm]{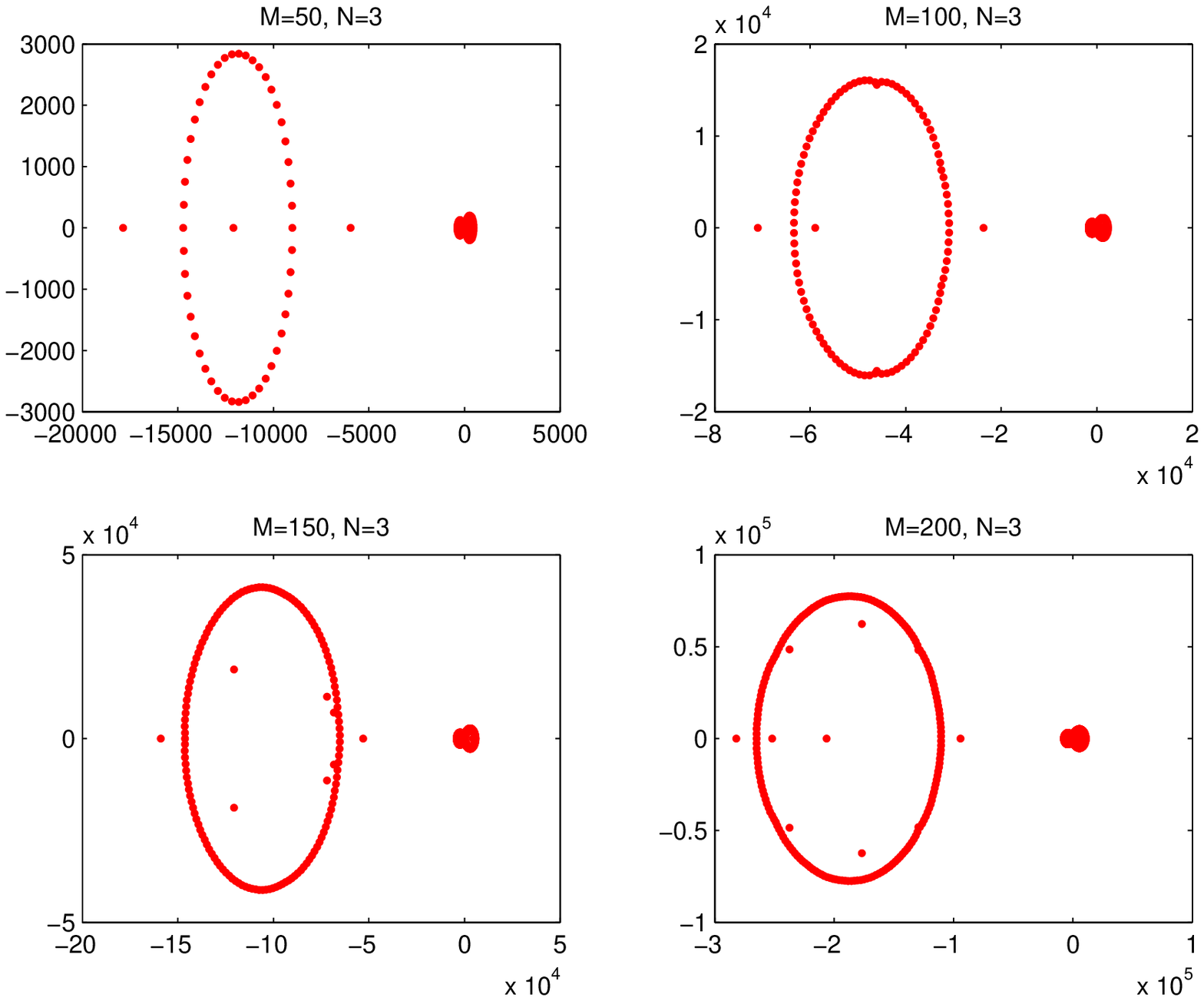}
%%(b) $\alf=1.99$.
%\end{minipage}
%\caption{Eigenvalues of MDFDM $\mathbf{D}^{\alf}$  for $N = 3$ and different values of $M$ with uniform mesh. Left: $\alf=1.01$, right: $\alf=1.99$.}
%\label{fig4}
%\end{figure}

Therefore, to overcome this issue, we minimize the jump in the integer fluxes at the interfaces by using the penalty technique. In particular, by introducing the following penalty term
\begin{equation}\label{penalty-term}
R(x)=\tau\left[\frac{du_\N}{dx}(x^+)-\frac{du_\N}{dx}(x^-)\right], \quad x\in(x_L,x_R),
\end{equation}
where $\tau$ is a penalty parameter, we obtain a modified differentiation matrix $\mathbf{D}^{\alf}+\mathbf{R}$, where $\mathbf{R}$ is given by
$$\mathbf{R}=
\begin{bmatrix}
   &  &   &  &  \\
   &   &  &  &  \\
 & &  \mathbf{0}& &  \\
   &   &   &     &  \\
\mathbf{R}^{1}  & \mathbf{R}^{2} & \cdots  &  \mathbf{R}^{\M} & \widetilde{\mathbf{R}}
\end{bmatrix}$$
with
\begin{align*}
\mathbf{R}^{i}=&
\tau(x_m)\left[\frac{d\psi_n^i(x_m^+)}{dx}-\frac{d\psi_n^i(x_m^-)}{dx}\right]_{m=1,..,M-1;n=1,\ldots,N_i-1}, i=1,\ldots,M, \\
\widetilde{\mathbf{R}}=&\tau(x_m)\left[\frac{d\phi_n(x_m^+)}{dx}-\frac{d\phi_n(x_m^-)}{dx}\right]_{m,n=1\ldots,M-1}.
\end{align*}

It is noted that the $(M-1)\times(M-1)$ matix $\widetilde{\mathbf{R}}$ is tridiagonal, the $(M-1)\times(N_1-1)$ matrix
$\mathbf{R}^{1}$ has only the first row with non-zero entries, the $(M-1)\times(N_\M-1)$ matrix
$\mathbf{R}^{\M}$ has only the last row with non-zero entries, and  the $(M-1)\times(N_i-1)$ matrix
$\mathbf{R}^{i},\,i=2,\ldots,M-1,$ has only two, i.e., the $(i-1)$-th and $i$-th rows with non-zero entries.
%, also the vector $\mathbf{r}$ only has two non-zero entries.

\begin{figure}[!t]


\centering
\includegraphics[width=0.46\textwidth,height=0.4\textwidth]{eigtau1_pfa101leg_ga}
\quad \;
\includegraphics[width=0.46\textwidth,height=0.4\textwidth]{eigtau100_pfa199leg_ga}
\caption{Eigenvalues of MDFDM with penalty term, i.e., $\mathbf{D}^{\alf}+\mathbf{R}$, for $M = 8$ and different values of $N$ by using the uniform mesh. Left: $\alf=1.01,\; \tau = 1$, right: $\alf=1.99,\; \tau = 100$.}
\label{fig5}
\end{figure}

\begin{figure}[!t]
\centering
\includegraphics[width=0.46\textwidth,height=0.4\textwidth]{eigtau10_hfa101leg_ga}
\quad \;
\includegraphics[width=0.46\textwidth,height=0.4\textwidth]{eigtau10_hfa199leg_ga}
\caption{Eigenvalues of MDFDM with penalty, i.e., $\mathbf{D}^{\alf}+\mathbf{R}$, for $N = 3$ and different values of $M$ by using the uniform mesh. Left: $\alf=1.01,\; \tau = 10$, right: $\alf=1.99,\; \tau = 10$.}
\label{fig6}
\end{figure}

%%%====  figure 5
%\begin{figure}[H]
%\begin{minipage}[b]{0.5\textwidth}
%\centering
%\includegraphics[width=6cm,height=6.cm]{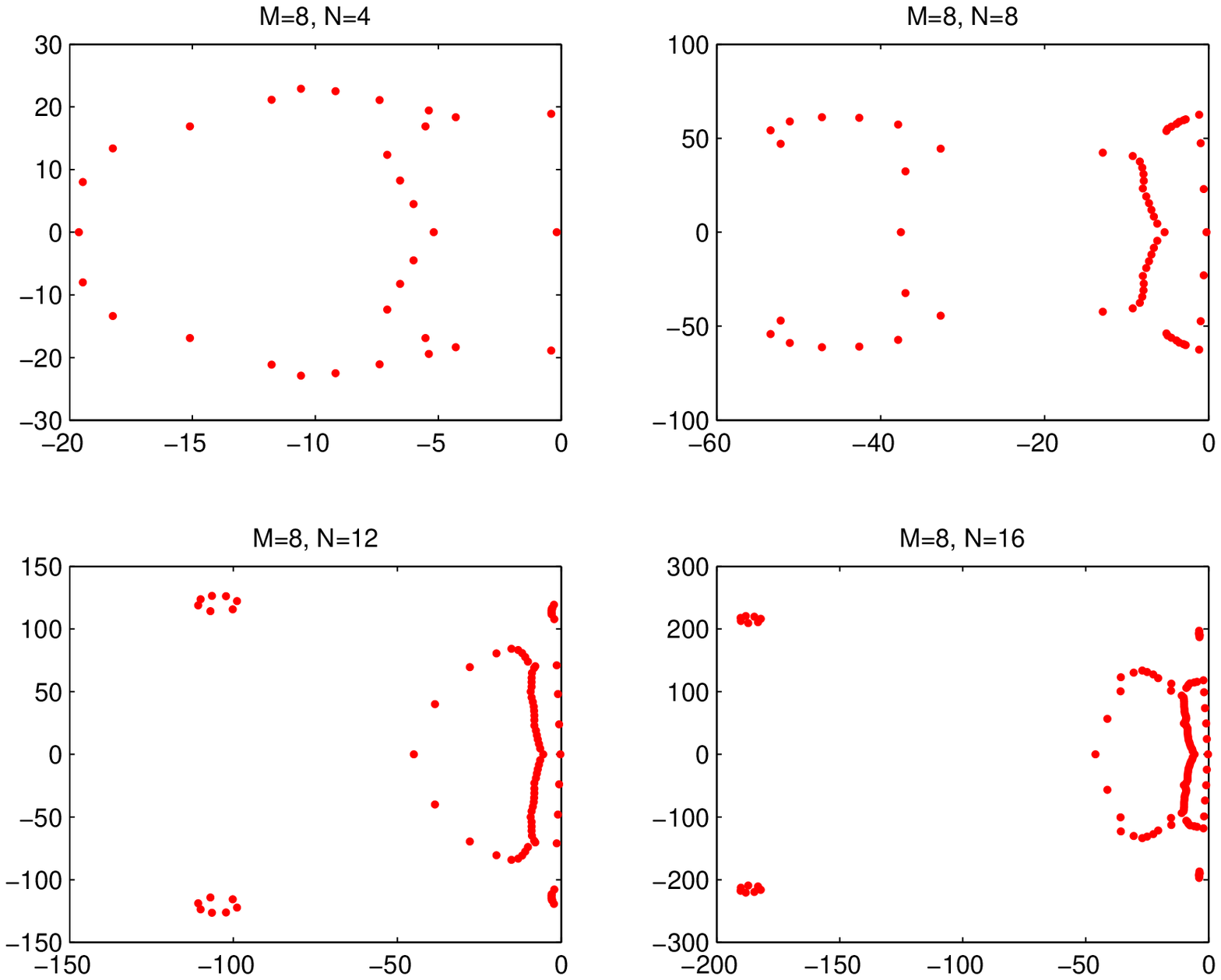}
%%% label for entire figure
%\end{minipage}
%\begin{minipage}[b]{0.5\textwidth}
%\centering
%\includegraphics[width=6cm,height=6.cm]{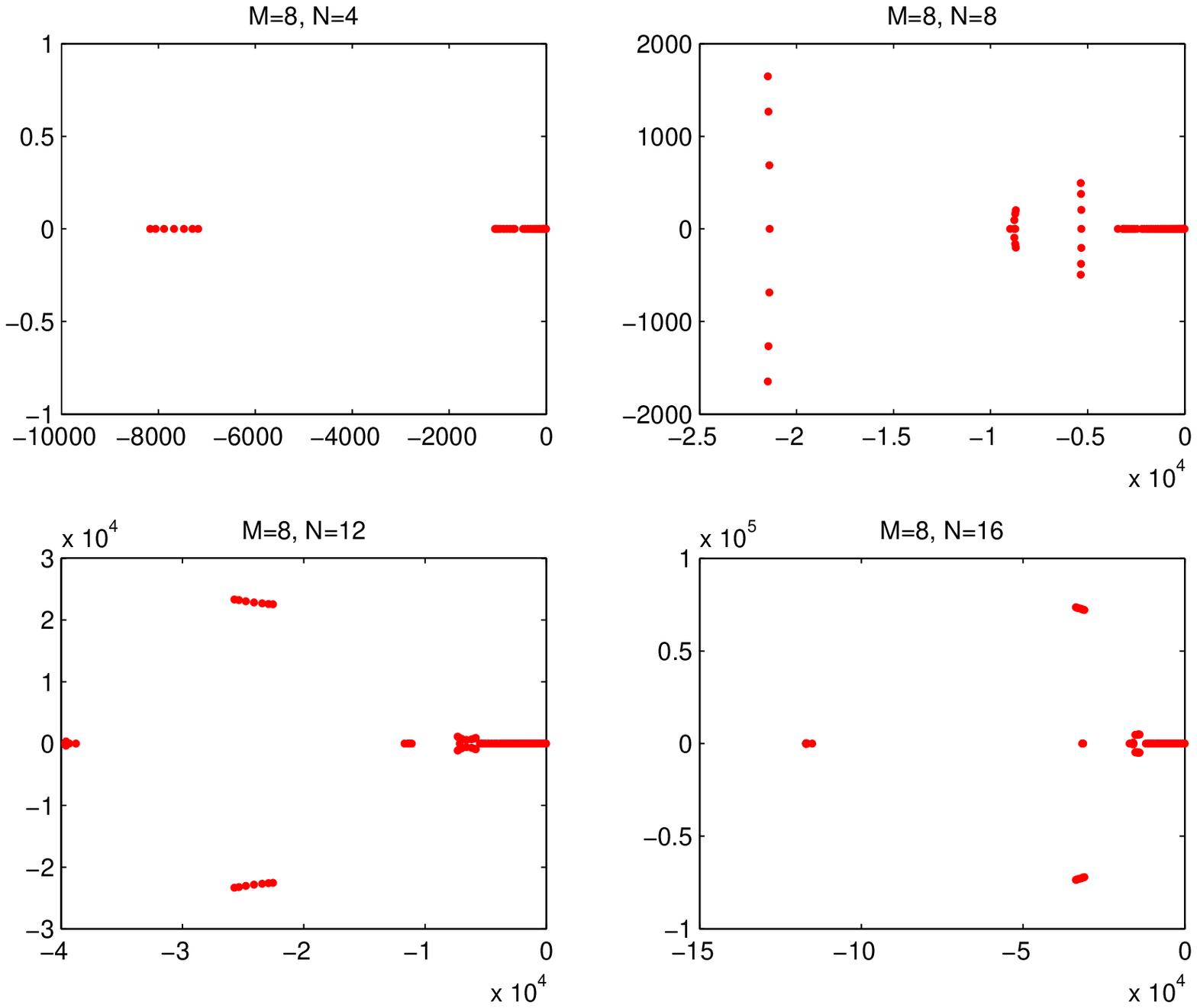}
%\end{minipage}
%\caption{Eigenvalues of MDFDM with penalty term, i.e., $\mathbf{D}^{\alf}+\mathbf{R}$, for $M = 8$ and different values of $N$ by using the uniform mesh. Left: $\alf=1.01,\; \tau = 1$, right: $\alf=1.99,\; \tau = 100$.}
%\label{fig5}
%\end{figure}
%
%
%%%====  figure 6
%\begin{figure}[H]
%\begin{minipage}[b]{0.5\textwidth}
%\centering
%\includegraphics[width=6.2cm,height=6cm]{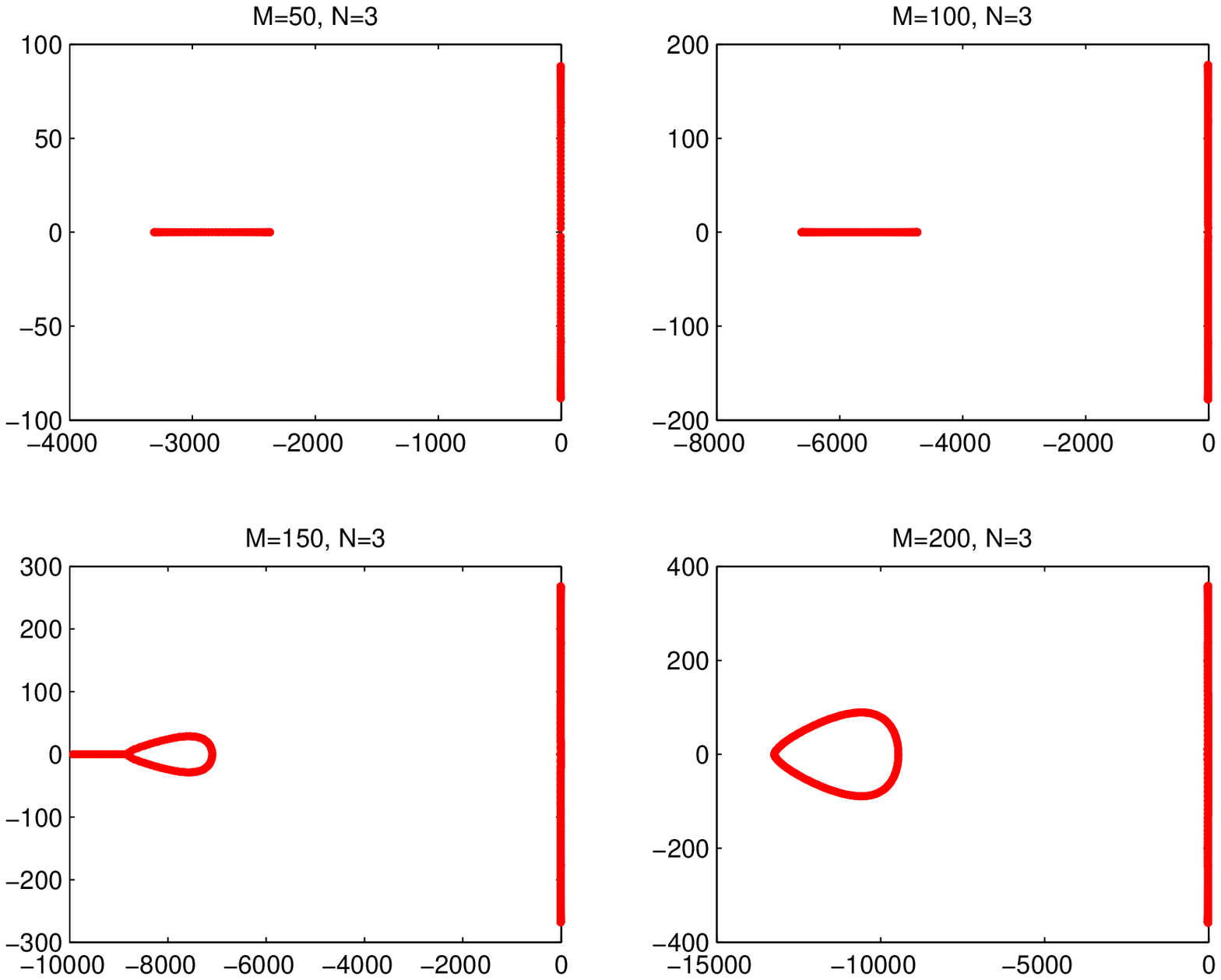}
%%% label for entire figure
%\end{minipage}
%\begin{minipage}[b]{0.5\textwidth}
%\centering
%\includegraphics[width=6.2cm,height=6cm]{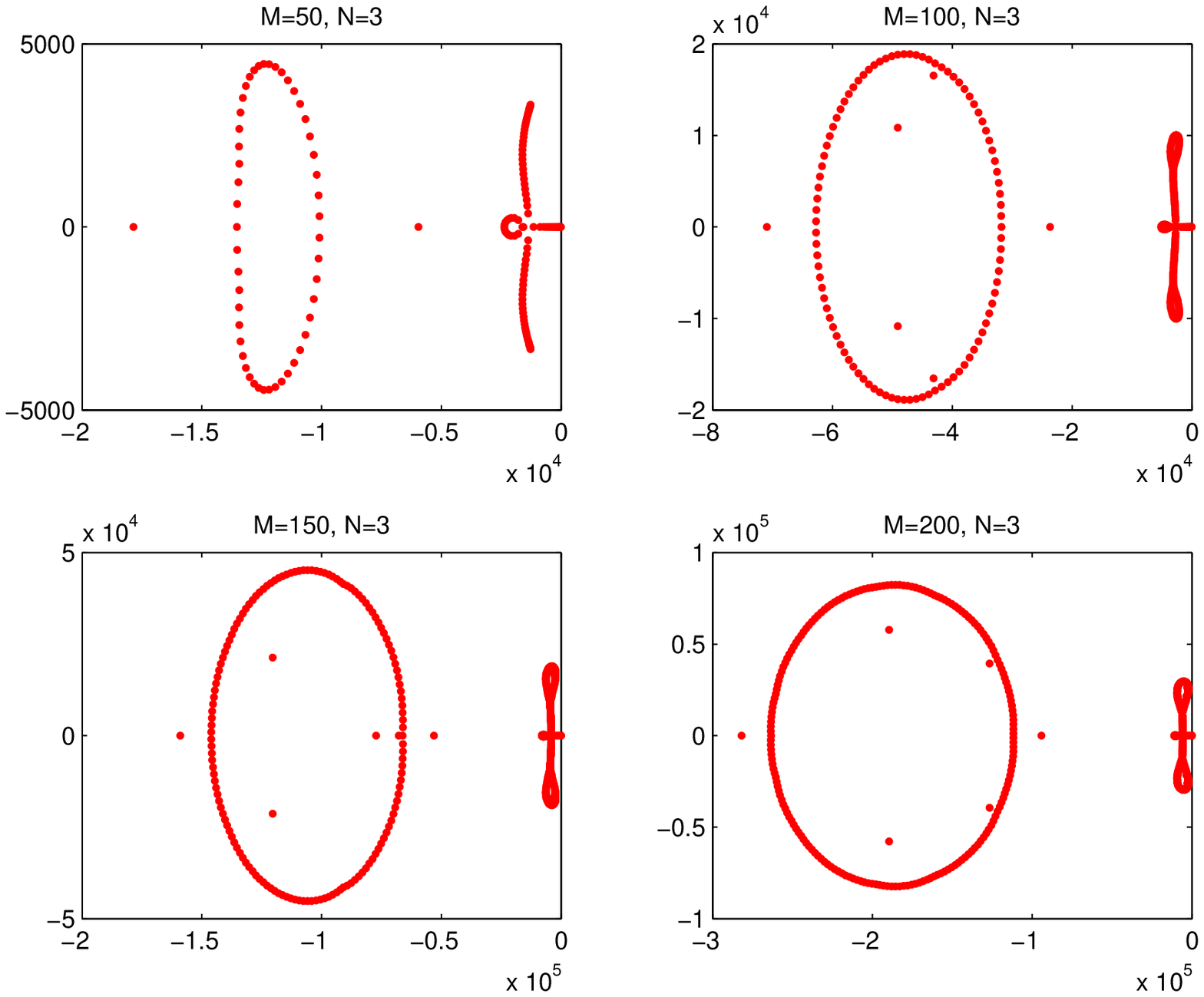}
%\end{minipage}
%\caption{Eigenvalues of MDFDM with penalty, i.e., $\mathbf{D}^{\alf}+\mathbf{R}$, for $N = 3$ and different values of $M$ by using the uniform mesh. Left: $\alf=1.01,\; \tau = 10$, right: $\alf=1.99,\; \tau = 10$.}
%\label{fig6}
%\end{figure}

%%====  figure 7
\begin{figure}[H]
\centering
\includegraphics[width=8.0cm]{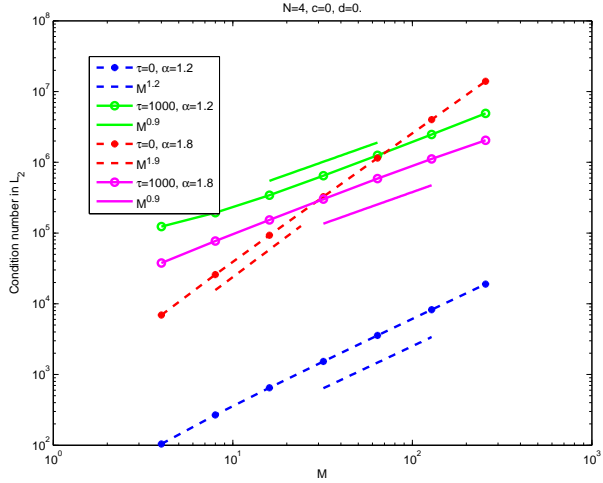}
\caption{Condition number with respect to $M$ with ($\tau=1000$) or without ($\tau=0$) penalty. The uniform mesh is used.}
\label{fig7}
\end{figure}

We show the distribution of the eigenvalues of the modified MDFDM, i.e., $\mathbf{D}^{\alf}+\mathbf{R}$, in Figs. \ref{fig5}-\ref{fig6} with the same parameters used for the MDFDM. We observe that the real parts of all eigenvalues are negative in all cases by choosing suitable penalty parameters.
The condition numbers are also investigated for some cases. In Fig. \ref{fig7}  we illustrate the condition numbers of the MDFDM in the $L_2$ norm.  We see that the penalty term can slow down the growth of the condition number.

\section{Application to the fractional Helmholtz equation and the fractional Burgers equation}\label{sec:num}
We are now in the position of numerical tests.
In the numerical tests, we use the following three types of mesh:
\begin{itemize}
\item Mesh 1: Uniform mesh:  $x_j=x_L+\frac{(x_R-x_L)j}{M},~ j=0,\ldots,M.$
\item Mesh 2: Graded mesh:   $x_j=x_L+(x_R-x_L)\left(\frac{j}{M}\right)^q,\quad  q>1, \quad j=0,\ldots,M.$
\item Mesh 3: Geometric mesh: $x_0=x_L, x_{j}=x_L+(x_R-x_L)*q^{\M-j},\quad 0<q<1,\quad j=1,\ldots,M. $
\end{itemize}
The number $\{N_i\}_{i=1}^\M$ of collocation points in each element is the same for all three cases for simplification.

\subsection{Fractional Helmholtz equation}
Let $\Lambda : = (x_L, x_R)$ and $1<\alf(x)<2$.
In this subsection we apply  the MDSCM to the following \emph{variable-order} fractional Helmholtz equation
\begin{equation}\label{mod-1}
   \lambda^2 u(x) - {_{x_L}}D^{\alf(x)}_x u(x)=f(x),\quad x\in \Lambda, \quad u(x_L)=u_L,\;  u(x_R)=u_R.
\end{equation}
The stabilized MDSCM  for \eqref{mod-1} is to find
$u_\N\in \mathbb{V}_\N$, such that
\begin{equation}\label{sd-fpc-pen}
\left[\lambda^2 u_\N-
{_{x_L}}D^{\alf(x)}_x u_\N - R(x)\right](x)=f(x),~ \forall \, x\in{\mathbb{N}}_o\setminus\{x_L,x_R\},
\end{equation}
and
\begin{equation}\label{bound-c2-pen}
u_\N(x_L)=u_L,\quad  u_\N(x_R)=u_R.
\end{equation}
The above two equations lead to the following linear system
\begin{equation}\label{sche1-mf-pen}
\left(\lambda^2 \mathbb{I}- \mathbf{D}^{\alf}-\mathbf{R}\right) \mathbf{u}=\mathbf{f}-\mathbf{r},
\end{equation}
where $\mathbb{I}$ is the unitary matrix and
\begin{align*}
\mathbf{x}&=[x_1^1,\cdots,x_{\N_1-1}^1,x_1^2,\cdots,x_{\N_2-1}^2,\cdots,x_1^\M,\cdots,x_{\N_\M-1}^\M, x_1,\cdots, x_{\M-1}]^T,\\
\mathbf{f}&=f(\mathbf{x})-u_L\left(\lambda^2\phi_0(\mathbf{x})-{_{x_L}}D^{\alpha(x)}_x\phi_0(\mathbf{x})\right)-u_R\left(\lambda^2\phi_\M(\mathbf{x})-{_{x_L}}D^{\alpha(x)}_x\phi_\M(\mathbf{x})\right),\\
\mathbf{r}&=\tau(\mathbf{x})\left\{ u_L\left[\frac{d\phi_0(\mathbf{x}^-)}{dx}-\frac{d\phi_0(\mathbf{x}^+)}{dx}\right]+
u_R\left[\frac{d\phi_\M(\mathbf{x}^-)}{dx}-\frac{d\phi_\M(\mathbf{x}^+)}{dx}\right]\right\}.
\end{align*}
and $\mathbf{u}=u_\N(\mathbf{x})$.

%{\it{Example 1.}}
\begin{example}\label{exp:FHE:smu}
The first test of MDSCM is to consider the problem (\ref{mod-1})
with $[x_L,x_R]=[-1, 1]$.  The exact solution is taken as $u(x)=\sin(\pi x)$, so the homogeneous boundary conditions are implemented.
The term  $ {_{x_L}}D_x^{\alf(x)} u(x)$ is approximated by
$${_{-1}}D_x^{\alf(x)} \sin(\pi x)\approx\sum_{k=0}^L (-1)^{k+1}\frac{\pi^{2k+1}(x+1)^{2k+1-\alf(x)}}{\Gamma(2k+2-\alf(x))},$$
with $L=50$ to compute the right hand function (RHF) $f(x)$. For $\alf(x)$,
we consider the following two cases:
\begin{enumerate}
\item The constant-order $\alf=1.1,1.5,1.9$.
\item  The variable-order $\alf(x)=1.1+\frac{x+1}{2.5}$.
\end{enumerate}
\end{example}

The aim of this example is to test the accuracy of the proposed method for the smooth solution. In this example the uniform mesh is used.
The maximum errors between the numerical solution and exact solution are shown in Figs. \ref{fig8}-\ref{fig9}.
We observe from the left plots of Fig. \ref{fig8} and Fig. \ref{fig9} that the spectral accuracy is obtained for both constant-order and variable-order fractional derivative. Also, the good accuracy can be obtained by $h$-refinement (right plots of Fig. \ref{fig8} and Fig. \ref{fig9}).
We also observe from Figs. \ref{fig8}-\ref{fig9} that the accuracy can be improved significantly by the penalty method.

\begin{figure}[!t]


\centering
\includegraphics[width=0.48\textwidth,height=0.4\textwidth]{Example1M4_prefine_ga}
\includegraphics[width=0.48\textwidth,height=0.4\textwidth]{Example1N4_hrefine_ga}
\caption{$L_\infty$-error for Example \ref{exp:FHE:smu} with the uniform mesh (Mesh 1). Left: $p$-refinement ($M = 4$), right: $h$-refinement ($N =4$). }
\label{fig8}
\end{figure}

%%%====  figure 8
%\begin{figure}[H]
%\begin{minipage}[b]{0.5\textwidth}
%\centering
%\includegraphics[width=6.0cm,height=5.6cm]{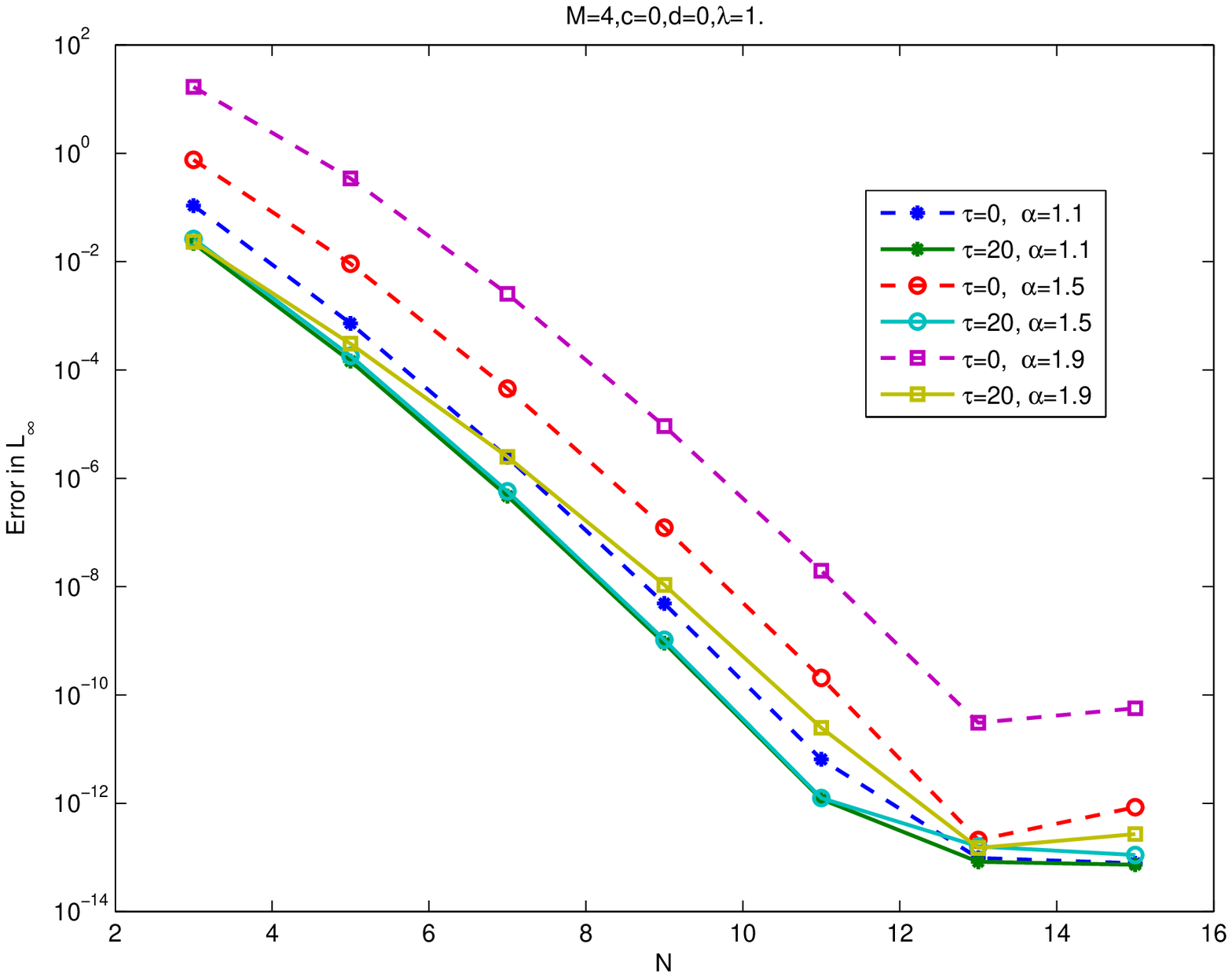}
%%(a) $p$-refinement ($M=4$).
%%% label for entire figure
%\end{minipage}
%\begin{minipage}[b]{0.5\textwidth}
%\centering
%\includegraphics[width=6.0cm,height=5.6cm]{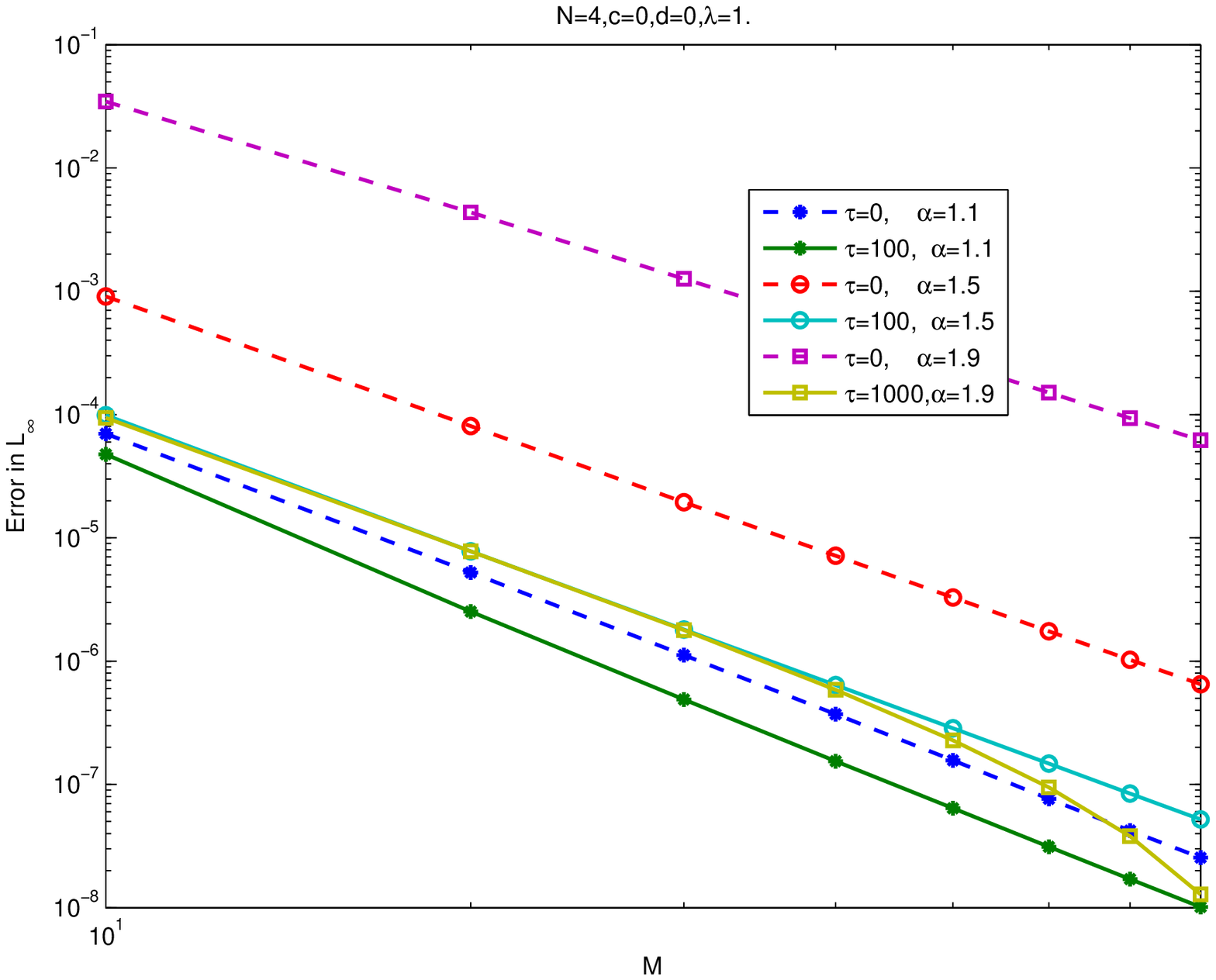}
%%(b) $h$-refinement ($N=4$).
%\end{minipage}
%\caption{$L_\infty$-error for Example \ref{exp:FHE:smu} with the uniform mesh (Mesh 1). Left: $p$-refinement ($M = 4$), right: $h$-refinement ($N =4$). }
%\label{fig8}
%\end{figure}
%

\begin{figure}[!t]


\centering
\includegraphics[width=0.48\textwidth,height=0.4\textwidth]{Example1M4varolam_prefine}
\includegraphics[width=0.48\textwidth,height=0.4\textwidth]{Example1N4varolam_hrefine}
\caption{Error in $L_\infty$ for Example \ref{exp:FHE:smu} with the uniform mesh (Mesh 1). Left: $p$-refinement ($M = 4$), right: $h$-refinement ($N = 4$). }
\label{fig9}
\end{figure}

%%%====  figure 9
%\begin{figure}[H]
%\begin{minipage}[b]{0.5\textwidth}
%\centering
%\includegraphics[width=6.5cm,height=6cm]{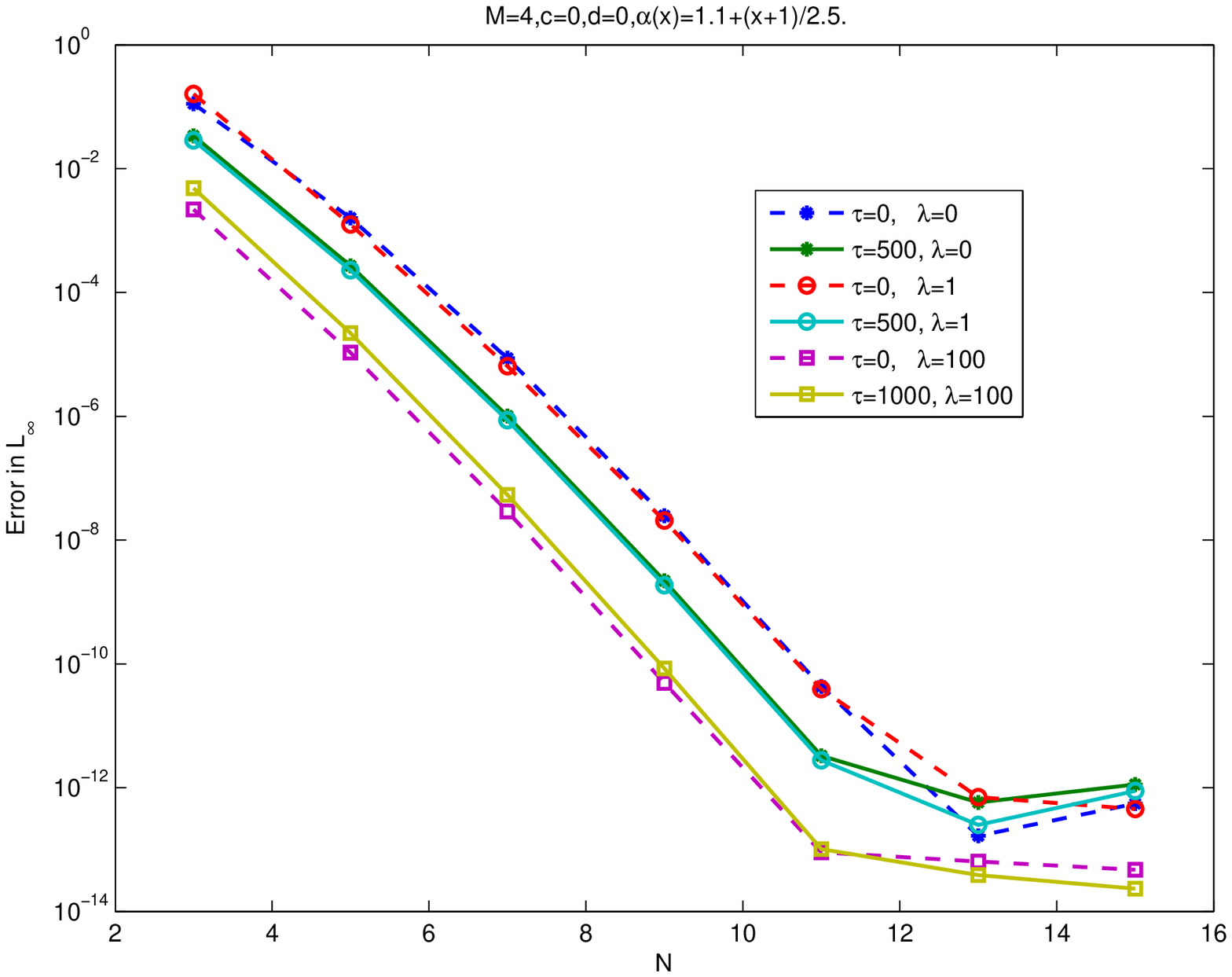}
%%(a) $p$-refinement ($M=4$).
%%% label for entire figure
%\end{minipage}
%\begin{minipage}[b]{0.5\textwidth}
%\centering
%\includegraphics[width=6.5cm,height=6cm]{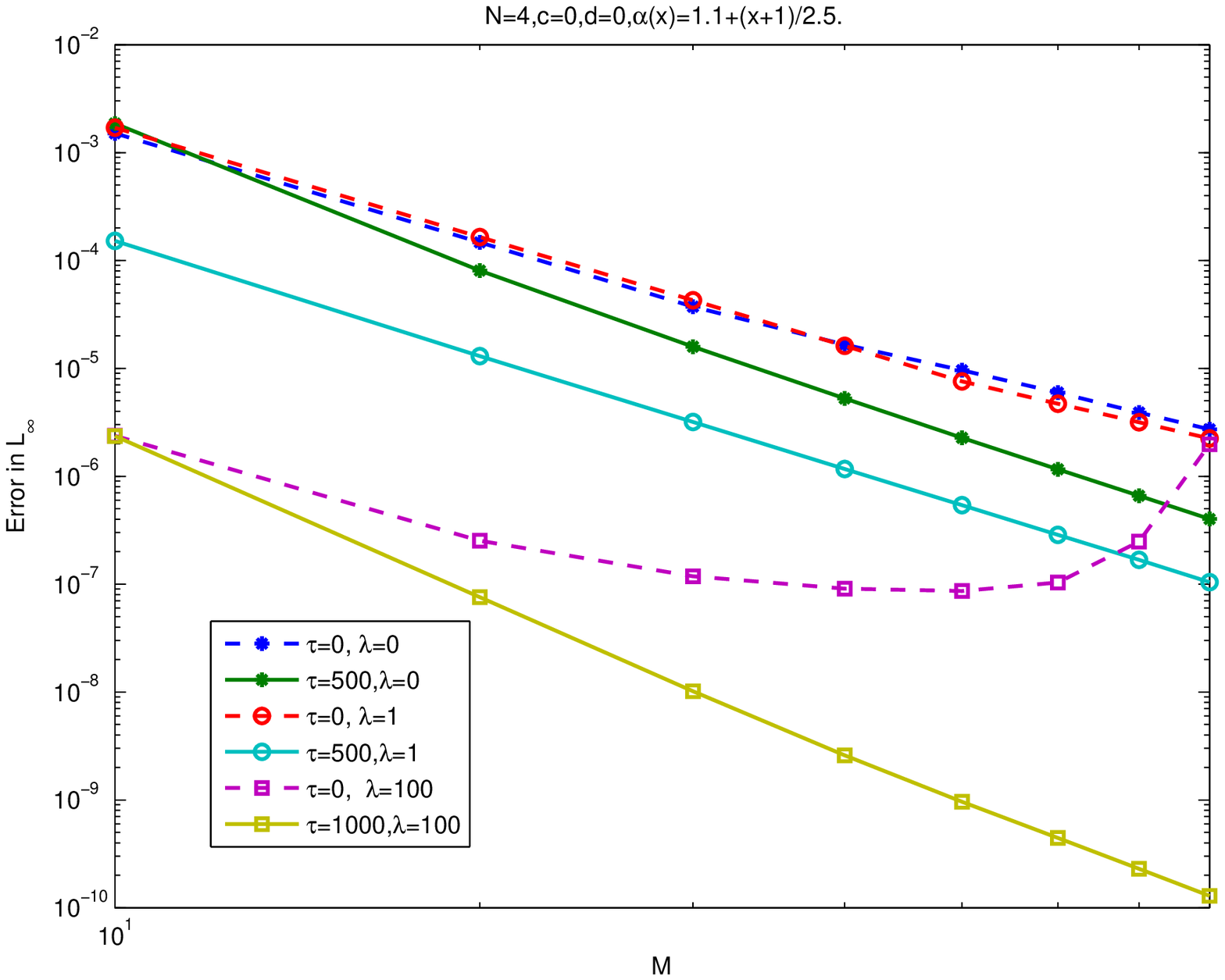}
%%(b) $h$-refinement ($N=4$).
%\end{minipage}
%\caption{Error in $L_\infty$ for Example \ref{exp:FHE:smu} with the uniform mesh (Mesh 1). Left: $p$-refinement ($M = 4$), right: $h$-refinement ($N = 4$). }
%\label{fig9}
%\end{figure}

%{\it{Example 2.}}
\begin{example}\label{exp:FHE:smf}
In the second test of the MDSCM we also consider the model problem (\ref{mod-1})
with $[x_L,x_R]=[-1, 1]$.  The exact solution is taken as $u(x)=(1-x)(1+x)^{\alf(x)-1}$, and then we can obtain the RHF
$$f(x)=\lambda^2 u(x)+\Gamma(1+\alf(x)).$$
When $\lambda=0$, we obtain a smooth RHF $f$ if the order $\alf(x)$ is smooth.
We can see that the exact solution has very low regularity since $0<\alpha(x)-1<1$. We aim to find out whether the MDSCM
can give a good approximation for a low regularity solution.
\end{example}

The maximum errors between the numerical and exact solutions are plotted in Figs. \ref{fig10}-\ref{fig11}.
We see that the MDSCM without penalty ($\tau=0$) hardly achieves any accuracy when $\alf(x)-1\in (0,1)$, whereas the MDSCM with the penalty term ($\tau\neq0$) always converges. Also, we can observe that the geometric mesh (Mesh 3) can achieve better accuracy
than the graded mesh (Mesh 2) when $\alf(x)-1$ is close to zero. We also compare with the spectral element method (``SEM") proposed in  \cite{MaoS17}  and we see that while it is better than the MDSCM, the penalty-based version exhibits superior performance overall.

\begin{figure}[!t]
\centering
\includegraphics[width=0.48\textwidth,height=0.4\textwidth]{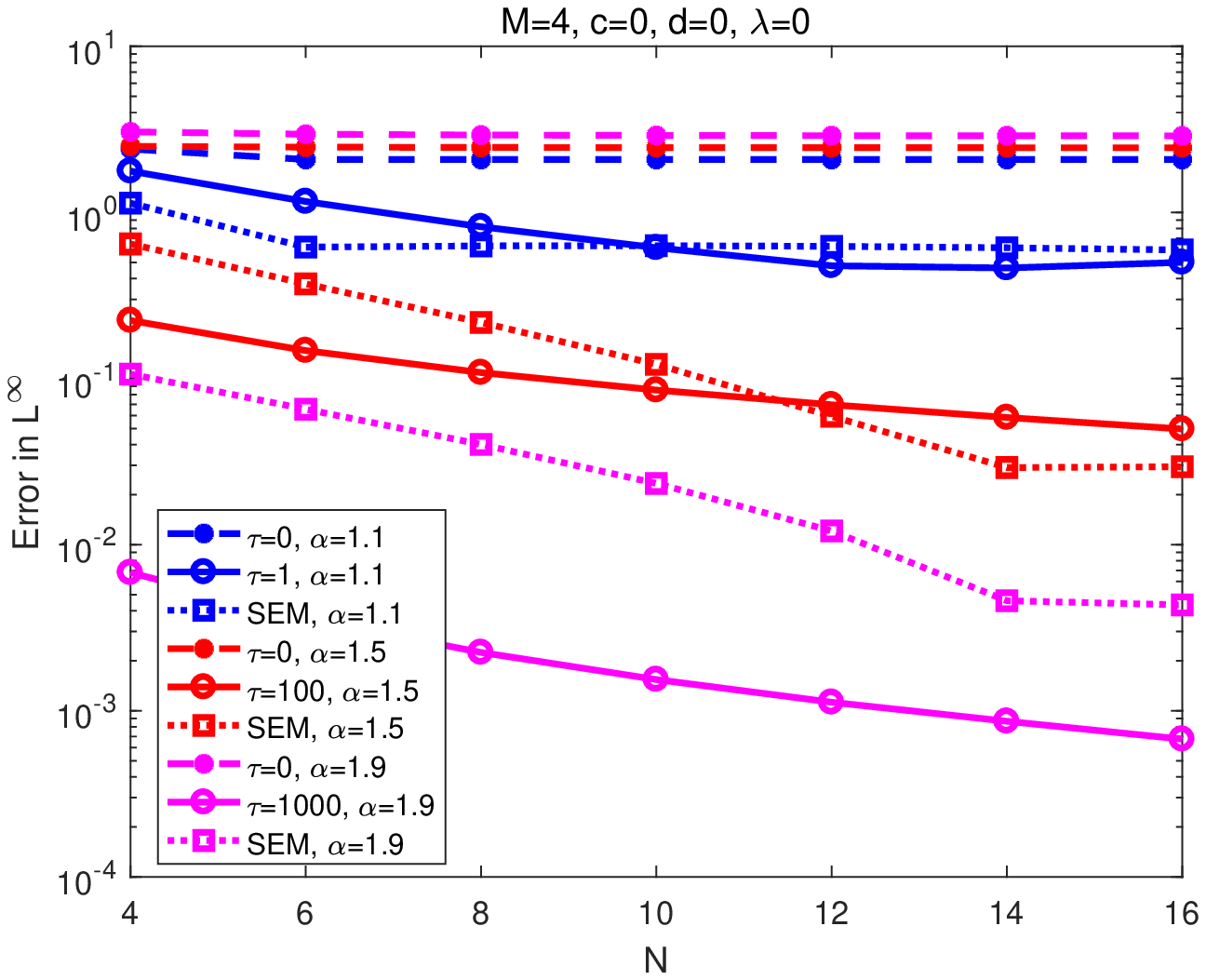}
\includegraphics[width=0.48\textwidth,height=0.4\textwidth]{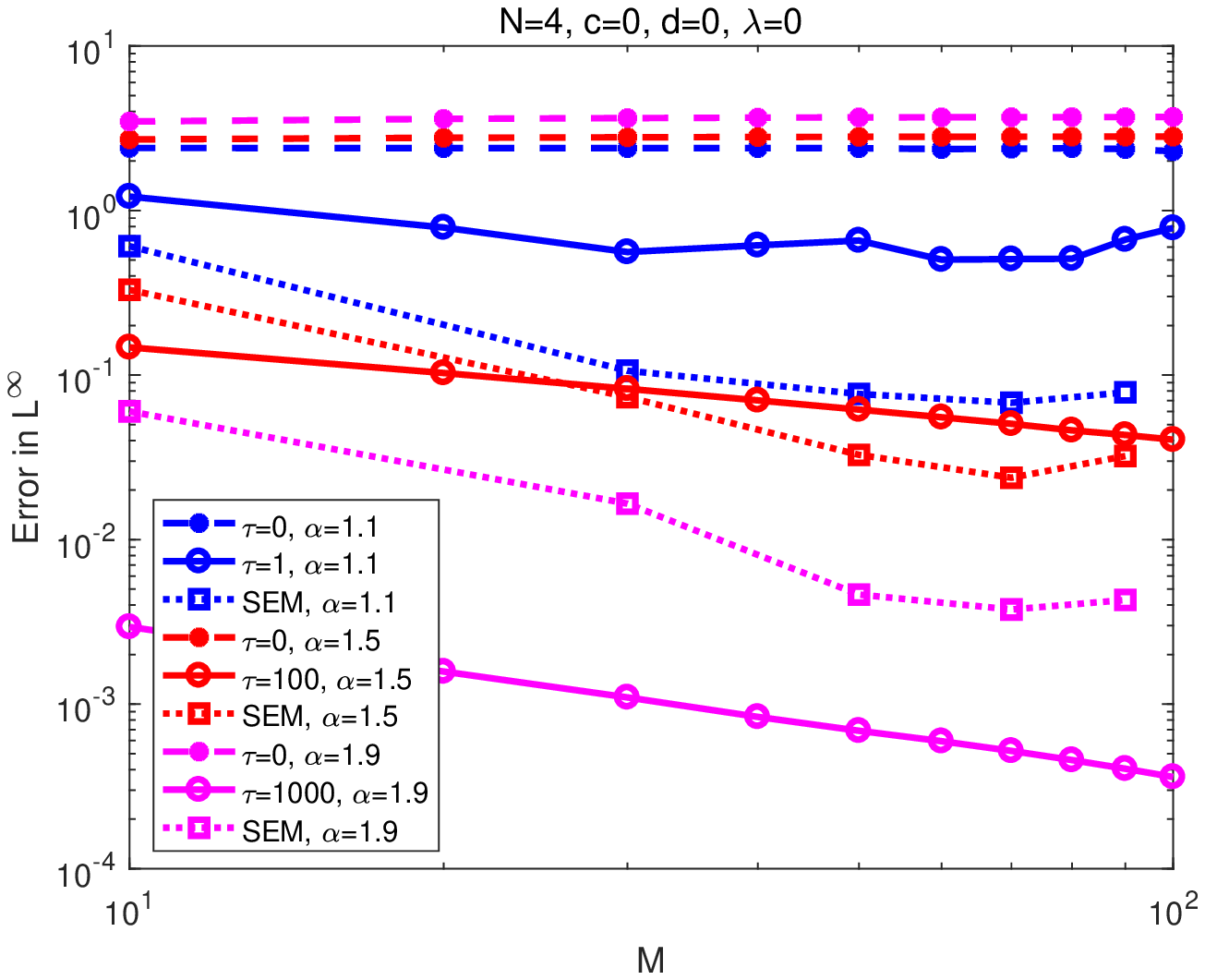}
\caption{Error in $L_\infty$ for Example \ref{exp:FHE:smf} with the uniform mesh (Mesh 1). Left: $p$-refinement ($M = 4$), right: $h$-refinement ($N = 4$). }
\label{fig10}
\end{figure}

\begin{figure}[!t]
\centering
\includegraphics[width=0.48\textwidth,height=0.4\textwidth]{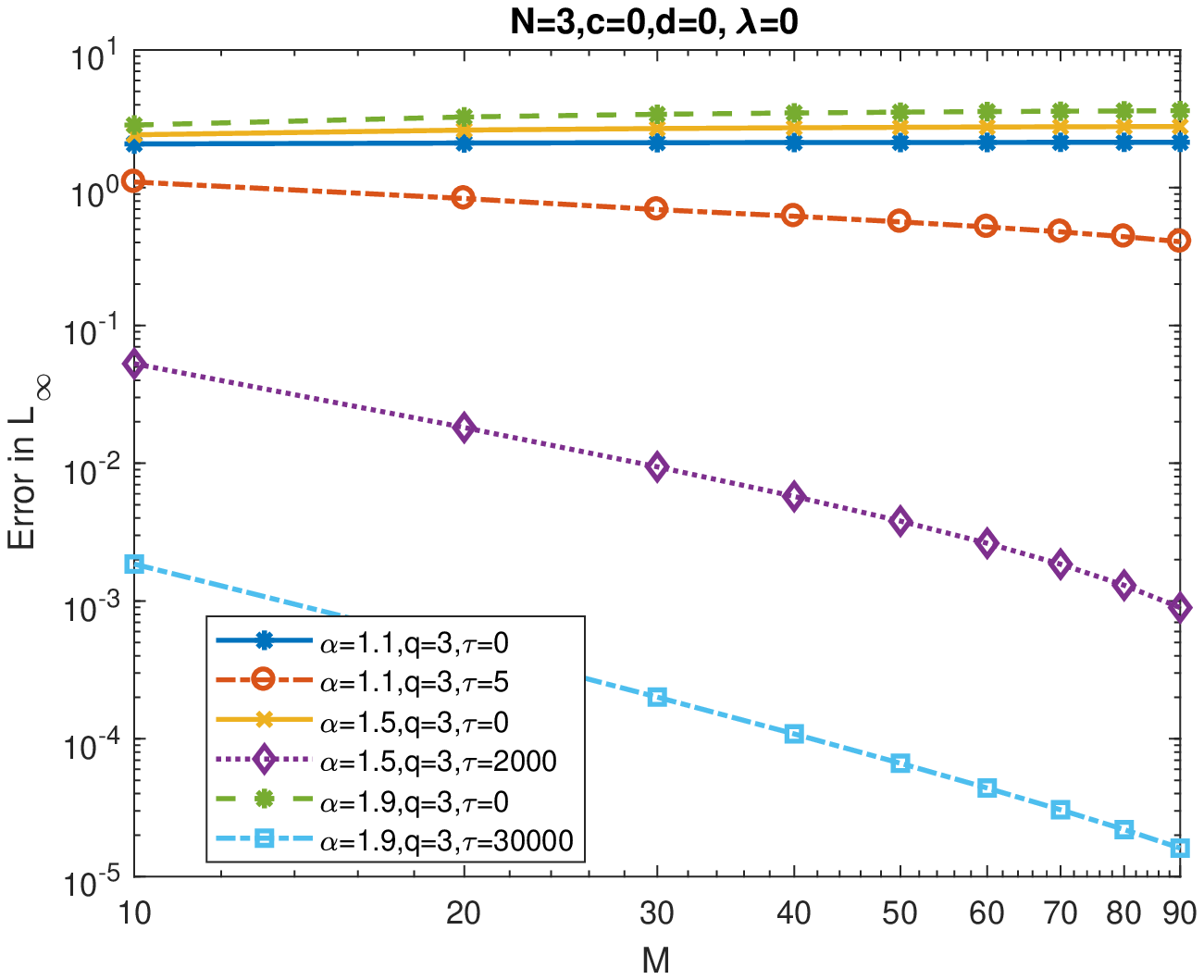}
\;
\includegraphics[width=0.48\textwidth,height=0.4\textwidth]{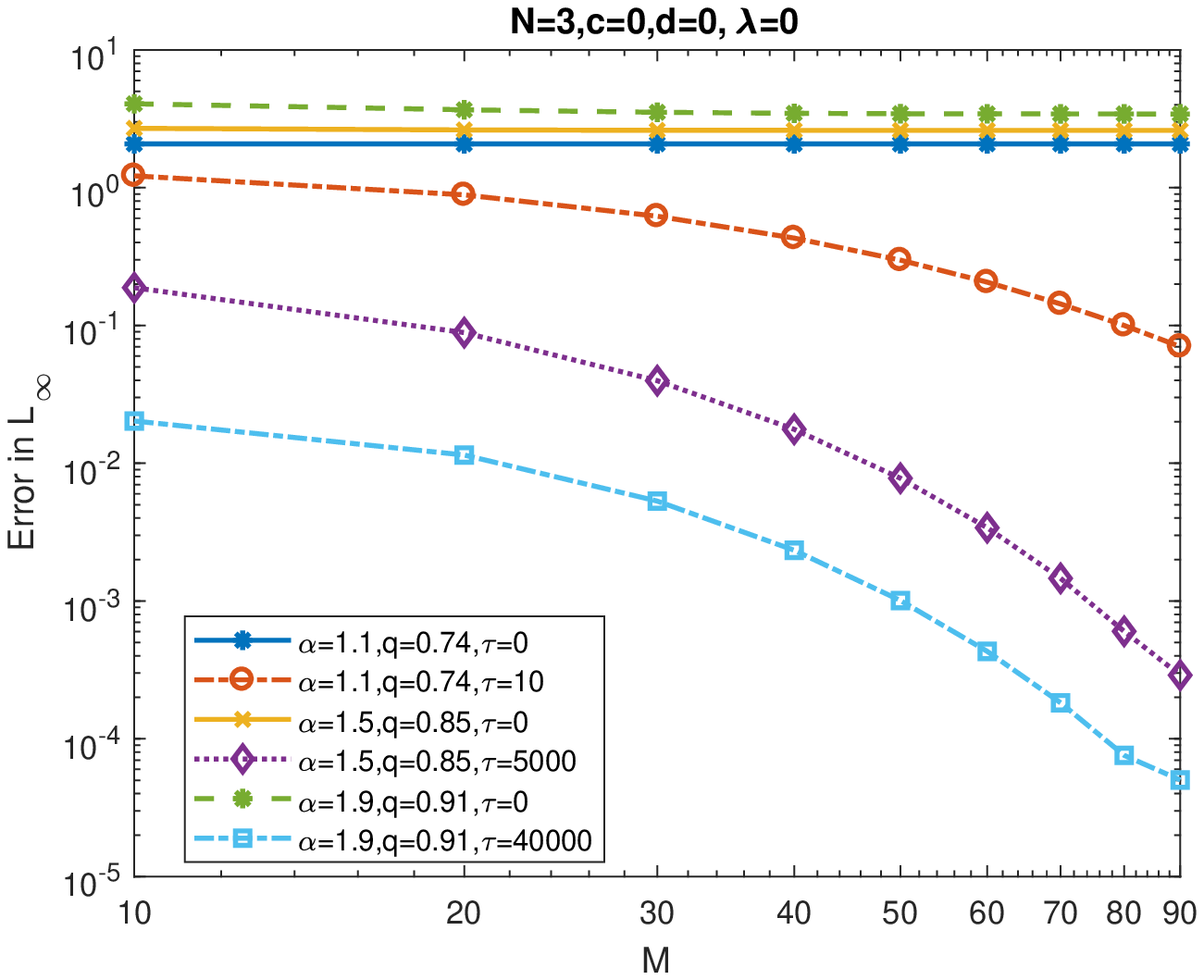}
\caption{Error in $L_\infty$ for Example \ref{exp:FHE:smf} for different values of $\alpha$. Left: graded mesh, right: geometric mesh.}
\label{fig11:0}
\end{figure}

\begin{figure}[!t]
\centering
\includegraphics[width=0.48\textwidth,height=0.4\textwidth]{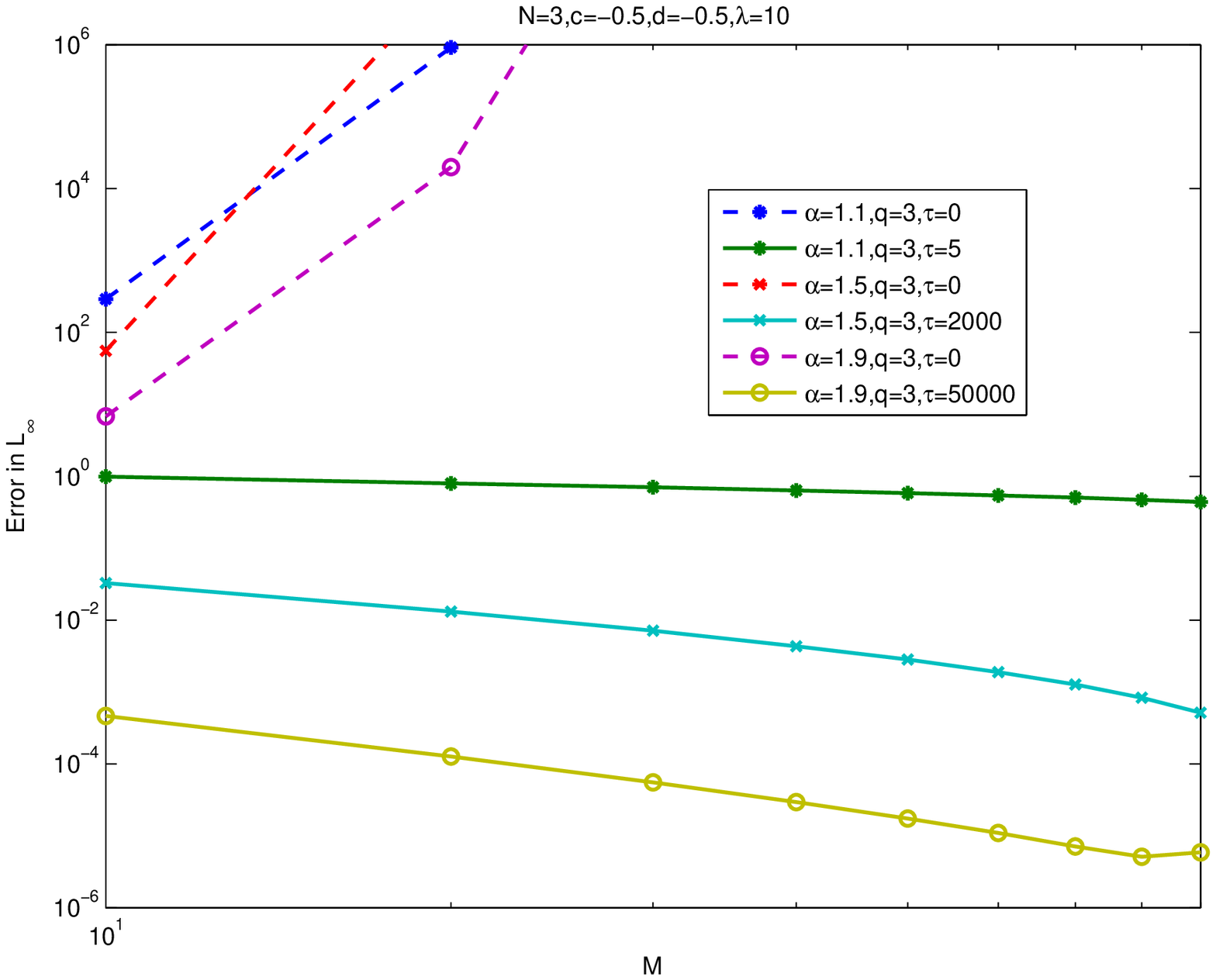}
\includegraphics[width=0.48\textwidth,height=0.4\textwidth]{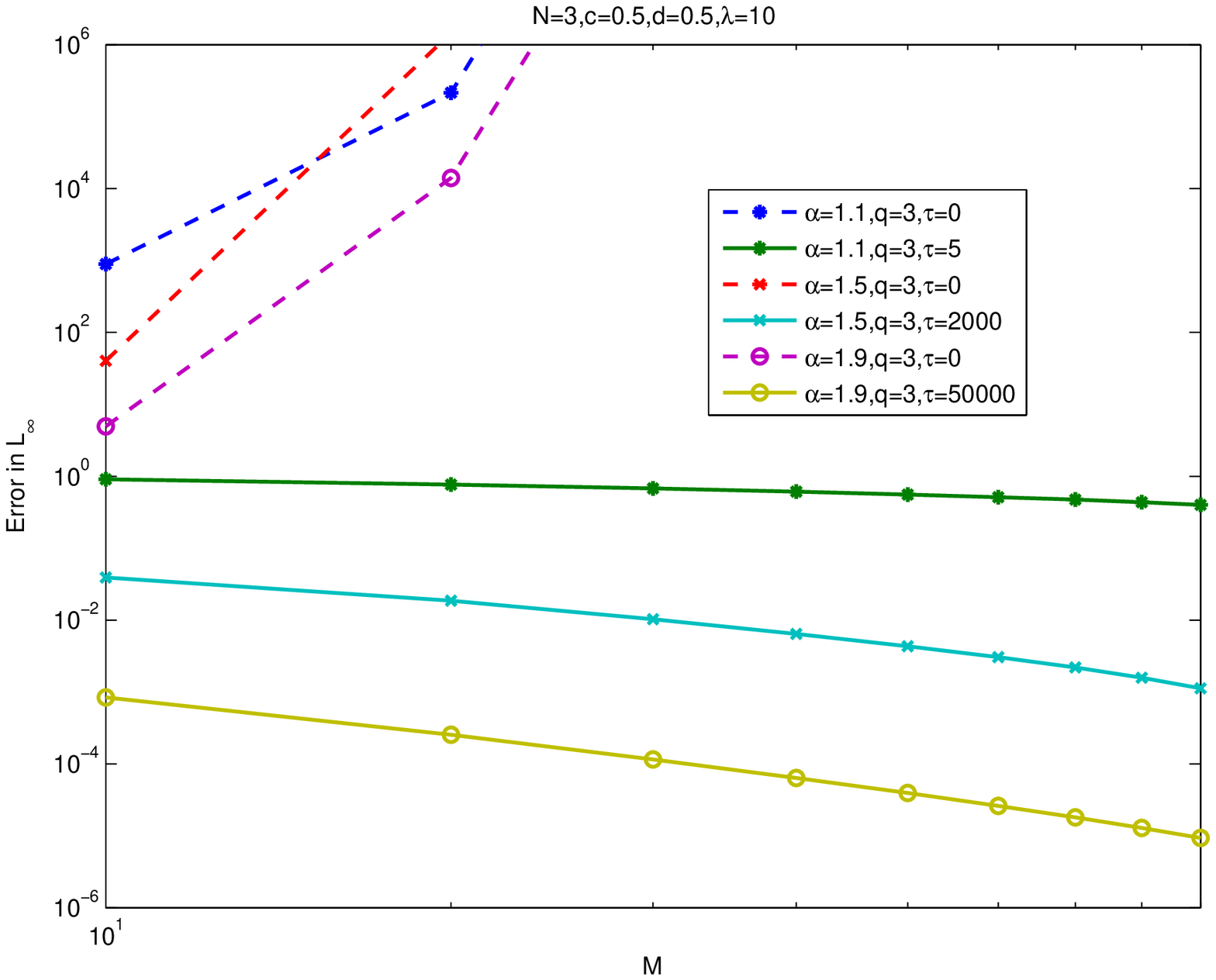}
\caption{Error in $L_\infty$ for Example \ref{exp:FHE:smf} with graded mesh and different Jacobi interpolants. Left: $c=d=-1/2$, right: $c=d=1/2$.}
\label{fig11}
\end{figure}

\subsection{Fractional Burgers equation}
In this subsection we employ the MDSCM to solve a time dependent problem, i.e., the following fractional Burgers equation (FBE)
\begin{equation}\label{examp-3}
             \partial_t u(x,t) +u(x,t) \partial_x u(x,t)=\epsilon D^{\alf(x,t)} u(x,t),
\end{equation}
subject to homogeneous Dirichlet boundary conditions and initial condition $u(x,0)=u_0(x)$, where
$\epsilon>0,\; 1<\alf(x,t)<2, \; (x,t)\in(-1,1)\times(0,1]$.

For the time discretization, we employ a semi-implicit time-discretization scheme, namely, the two-step second-order Crank-Nicolson/leapfrog scheme, then, the full discretization scheme reads as: for $n=1,2,\ldots,$
% \begin{equation}\label{schm-cn3s}
% \left\{
%              \begin{array}{ll}
%          u^{n+1}-\Delta t\epsilon D^{\alf^{n+1}} u^{n+1}=u^{n-1}+\Delta t\epsilon D^{\alf^{n-1}} u^{n-1}-2\Delta t( u^n \partial_x u^n) , \\
%           u^{1}=u^{0}+\Delta t\epsilon D^{\alf^{0}} u^{0}-\Delta t( u^0 \partial_x u^0) ,\\
%                u^0(x)=u_0(x).
%              \end{array}
%            \right.
% \end{equation}
% By using MDSCM, the scheme (\ref{schm-cn3s}) takes the following matrix form
\begin{equation}\label{schm-cnmf}
\left\{
 \begin{array}{ll}
        \left(\mathbb{I}-\Delta t\epsilon (\mathbf{D}^{\alf^{n+1}}+\mathbf{R})\right) \mathbf{u}^{n+1}=\mathbf{g}, \\
          \mathbf{u}^{1}=\mathbf{u}^{0}+\Delta t\epsilon \mathbf{D}^{\alf^{0}} \mathbf{u}^{0}-\Delta t( \mbox{diag}(\mathbf{u}^0) \mathbf{D}\mathbf{u}^0) ,\\
              \mathbf{u}^0=u_0(\mathbf{x}),
             \end{array}
           \right.
\end{equation}
where
$$\mathbf{g}=\left(\mathbb{I}+\Delta t\epsilon \mathbf{D}^{\alf^{n-1}}\right) \mathbf{u}^{n-1}-2\Delta t( \mbox{diag}(\mathbf{u}^n) \mathbf{D}\mathbf{u}^n),$$
$\mathbf{D}^{\alf^{n}},\; \mathbf{R}$ and $\mathbf{D}$ are the MDFDM of $\alf$-order, the penalty matrix and
the first-order differentiation matrix, respectively.
%We only need to solve one linear system in each step in scheme (\ref{schm-cnmf}).

\begin{example}\label{exp:Burgers}
In this example, we consider the initial condition $u_0(x)=\sin(\pi x)$ and the following five cases of fractional order considered in \cite{ZenZK15}:
\begin{itemize}
\item Case 1: (constant-order) $\alf(x,t)=1.1,1.2,1.3,1.5,1.8; $
\item Case 2: (monotonic increasing-order) $\alf(x,t)=1+\frac{5+4x}{10};$
\item Case 3: (monotonic decreasing-order) $\alf(x,t)=1+\frac{5-4x}{10};$
\item Case 4: (nonsmooth order) $\alf(x,t)=\frac{4}{5}|\sin(10\pi(x-t))|+1.1;$
\item Case 5: (nonsmooth order) $\alf(x,t)=\frac{4|xt|}{5}+1.1.$
\end{itemize}
\end{example}

We first consider the constant-order case, i.e., Case 1. We show the numerical solutions of the FBE \eqref{examp-3} at time $t=1$ in Fig. \ref{fig:allsolution:uniform} for different values of $\alpha$ by using the uniform mesh. A comparison of the numerical solutions for $\alpha = 1.5$ is also shown in Fig. \ref{fig:alp15} by using $h$ or $p$ refinement. Observe that the obtained numerical result is the same as the one obtained in \cite[Fig. 5]{ZenZK15} and the solutions near the left boundary have sharp transitions, especially for smaller values of $\alpha$.

\begin{figure}[!t]
\centering
\includegraphics[width=8.0cm]{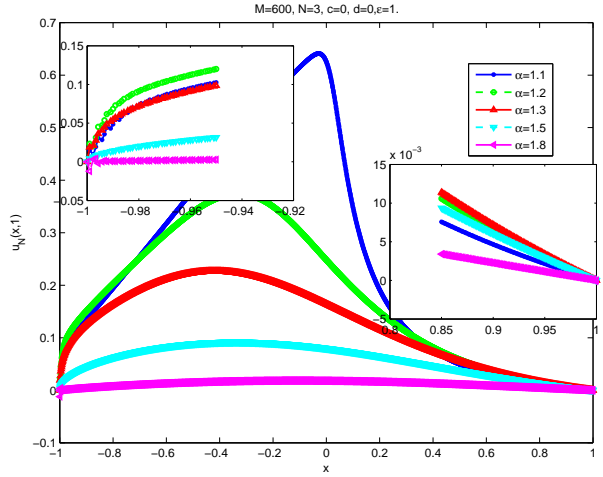}
\caption{Numerical solutions at $t=1$ for Example \ref{exp:Burgers} (Case 1) with the uniform mesh (Mesh 1). $\epsilon=1,\, M = 600,\, N=3,\, c=d=0,\, \tau=10^3,\, \Delta t=10^{-3}$. }
\label{fig:allsolution:uniform}
\end{figure}

\begin{figure}[!t]


\centering
\includegraphics[width=0.48\textwidth,height=0.4\textwidth]{Burg_F11_G_2_00com_fanh_pref}
\includegraphics[width=0.48\textwidth,height=0.4\textwidth]{Burg_F11_G_1_00com_fanh_href}
\caption{Comparison of numerical solutions at $t=1$ for Example \ref{exp:Burgers} with the one obtained in [49] for $\alpha = 1.5$. Left: $p$-refinement, right: $h$-refinement.}
\label{fig:alp15}
\end{figure}

%\begin{figure}[H]
%\begin{minipage}[b]{0.5\textwidth}
%\centering
%\includegraphics[width=6.5cm,height=6cm]{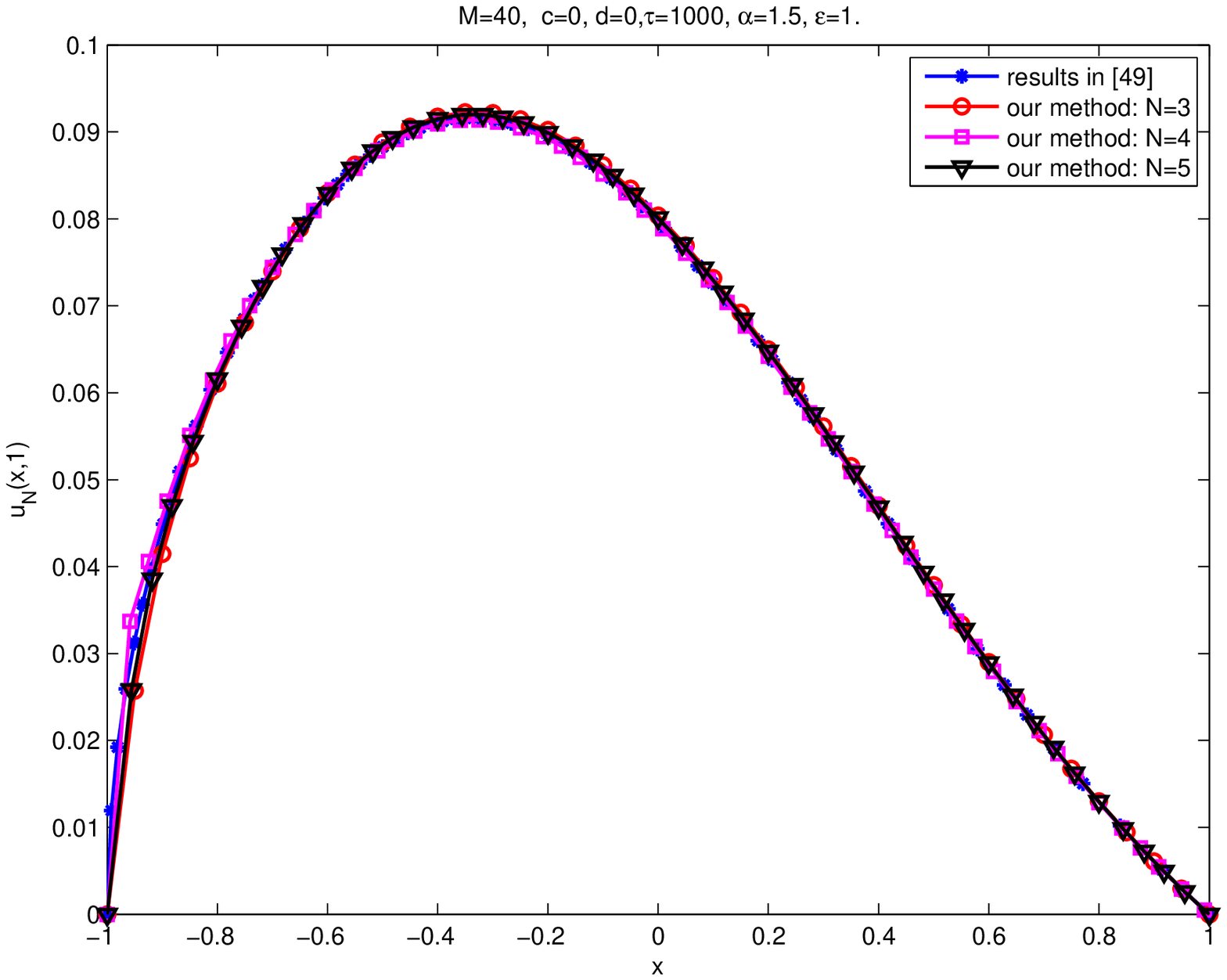}
%%(a) $p$-refinement.
%%% label for entire figure
%\end{minipage}
%\begin{minipage}[b]{0.5\textwidth}
%\centering
%\includegraphics[width=6.5cm,height=6cm]{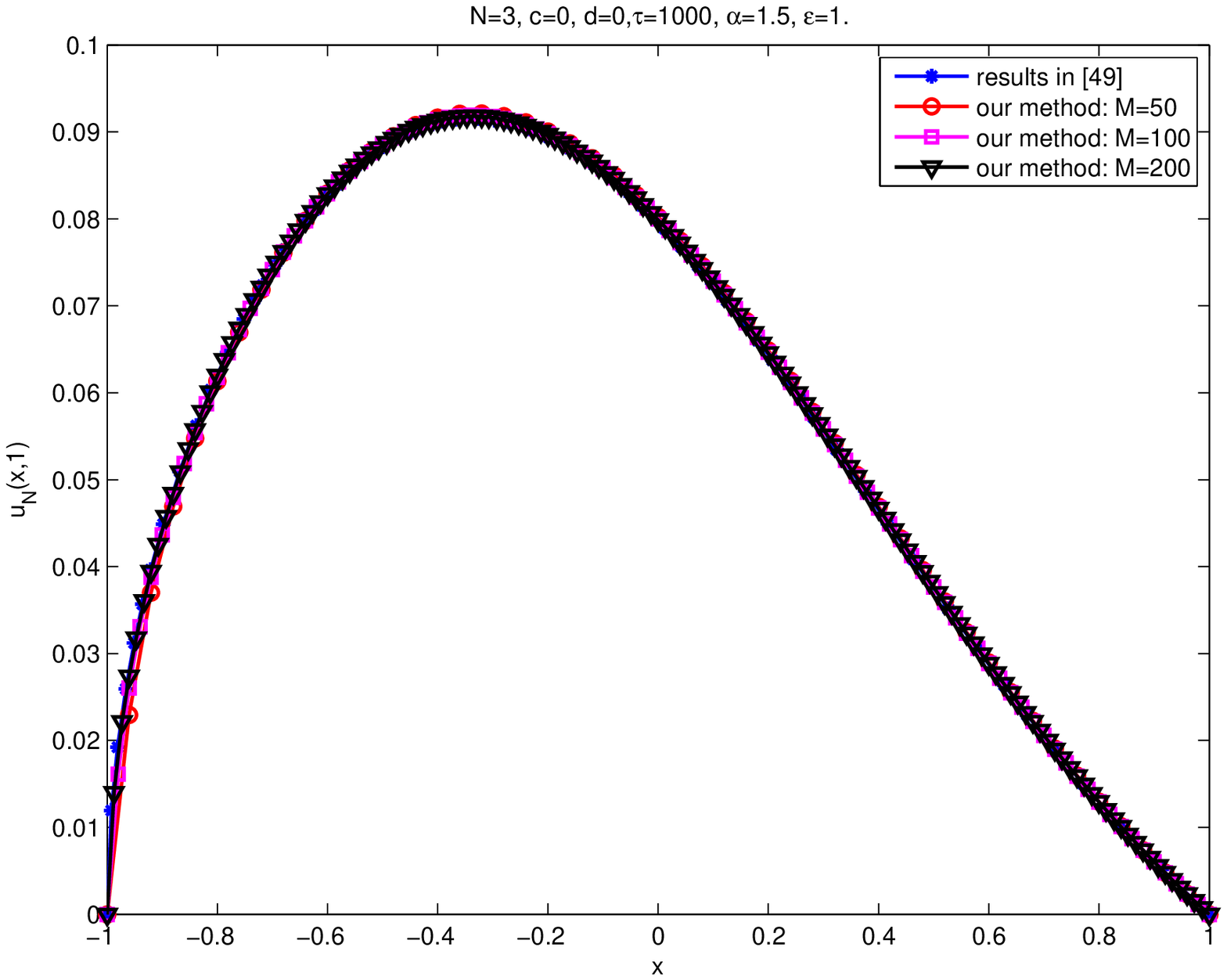}
%%(b) $h$-refinement.
%\end{minipage}
%\caption{Comparison of numerical solutions at $t=1$ for Example \ref{exp:Burgers} with the one obtained in [49] for $\alpha = 1.5$. Left: $p$-refinement, right: $h$-refinement.}
%\label{fig:alp15}
%\end{figure}

Furthermore, to illustrate the effect of the proposed MDSCM in resolving the issue of singularities, we show the numerical solutions for $\alpha = 1.1$ at time $t = 1$ in Fig. \ref{fig:alp11uniformgradedgeom} with uniform mesh (left plot) and graded mesh or geometric mesh (right plot), respectively. The reference solution is computed by using the graded mesh with $M = 200,\, q=3$. We observe that the one-domain spectral method, although it has better accuracy at the left boundary, it exhibits oscillations, which propagate and eventually renderer the solution erroneous. However, the $h$-refinement can resolve this issue. Moreover, by using  graded or geometric meshes, we can further enhance the accuracy of the solution. Also, the result by using the graded mesh is more accurate compared with the results obtained by using the uniform or the geometric mesh.
We point out here that for the geometric mesh, we first divide the interval $[-1,1]$ into two subintervals $[-1,-0.95]$ and $[-0.95,1]$, and subsequently use a geometric mesh for the first subinterval with 10 spectral elements and $q= 0.5$ while we use a uniform mesh for the second subinterval with $M-10$ elements, where $M$ is the total number of elements over the entire interval.

\begin{figure}[!t]
\centering
\includegraphics[width=0.48\textwidth,height=0.4\textwidth]{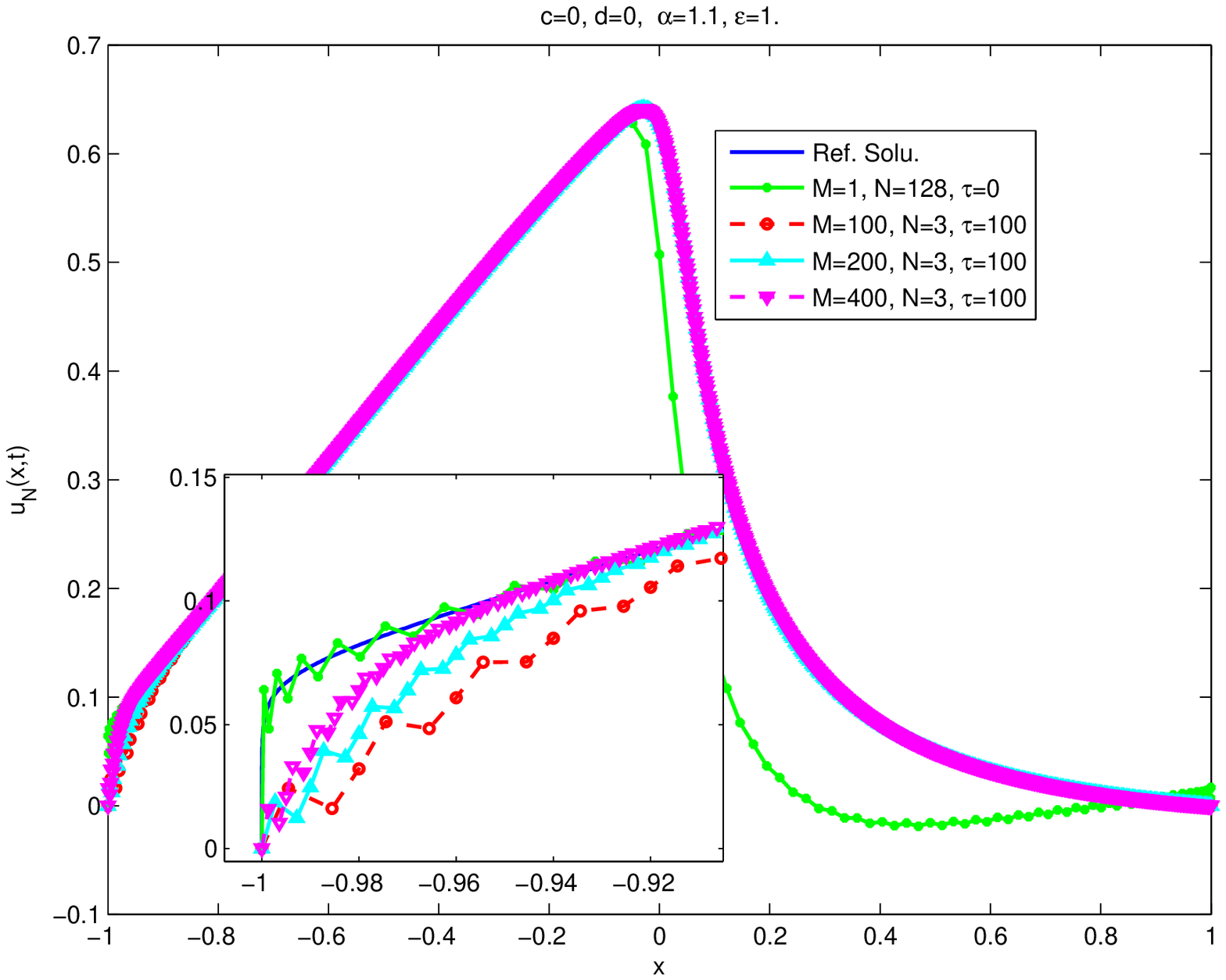}
\includegraphics[width=0.48\textwidth,height=0.4\textwidth]{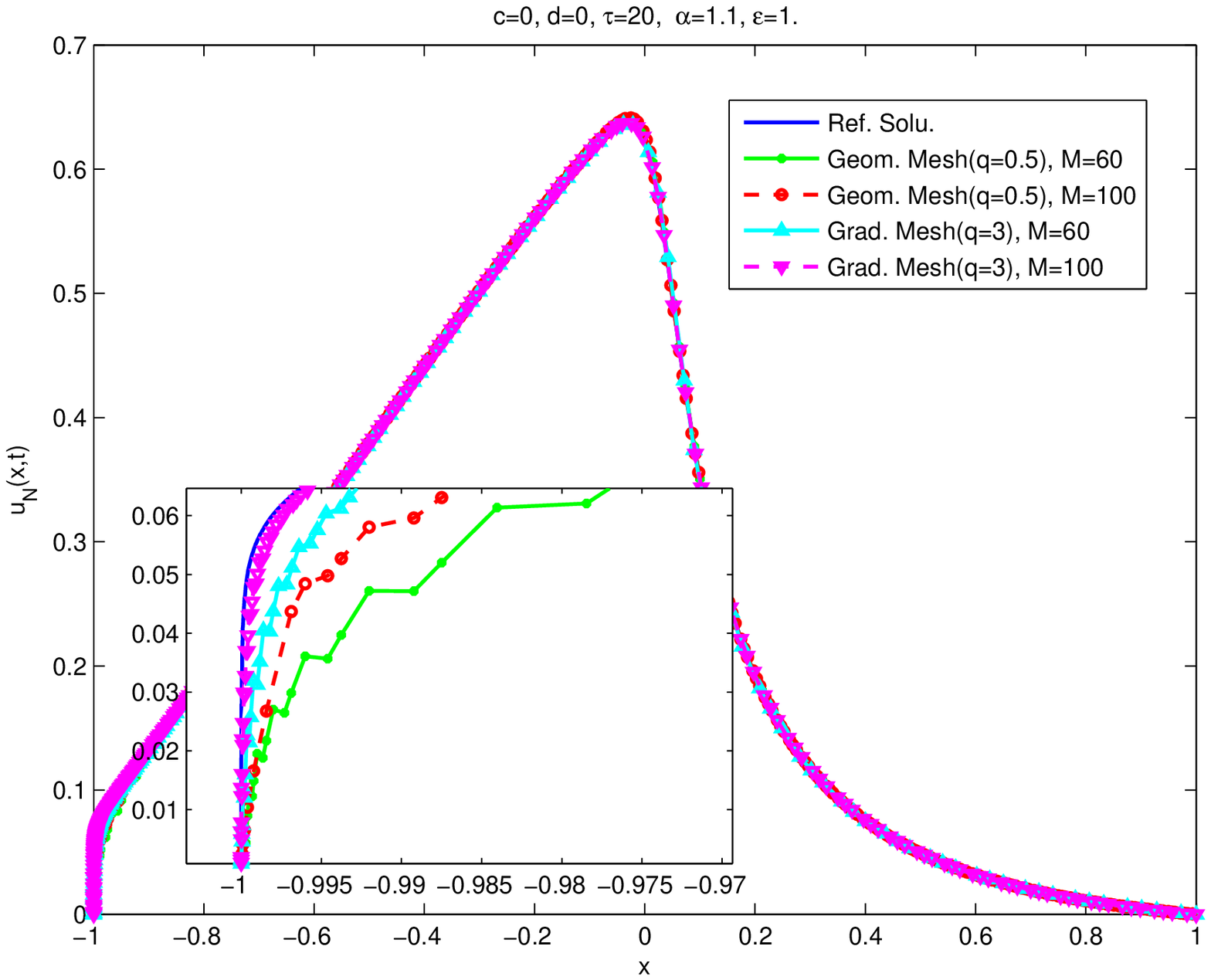}
\caption{Numerical solutions at $t=1$ for Example \ref{exp:Burgers} (Case 1) and the value of the fractional order $\alpha = 1.1$. Left: uniform mesh, right: graded mesh and geometric mesh.}
\label{fig:alp11uniformgradedgeom}
\end{figure}

%
%\begin{figure}[H]
%\begin{minipage}[b]{0.5\textwidth}
%\centering
%\includegraphics[width=6.5cm,height=6.0cm]{NS_cnt1mvnvtauva11e1_gm}
%%
%%% label for entire figure
%\end{minipage}
%\begin{minipage}[b]{0.5\textwidth}
%\centering
%\includegraphics[width=6.5cm,height=6.0cm]{NS_cnt1geomvsgradedm11_gm}
%%
%\end{minipage}
%\caption{Numerical solutions at $t=1$ for Example \ref{exp:Burgers} (Case 1) and the value of the fractional order $\alpha = 1.1$. Left: uniform mesh, right: graded mesh and geometric mesh.}
%\label{fig:alp11uniformgradedgeom}
%\end{figure}
%

We also study the behavior of the solutions with different values of viscosity $\varepsilon$. The numerical results for different values of fractional order $\alpha = 1.3, \,1.8$ are shown in Fig. \ref{fig:diffviscosity}. We observe high degree of sharpness when the value of viscosity $\varepsilon$ is very small, and this sharpness can be captured by using the proposed MDSCM.

\begin{figure}[!t]


\centering
\includegraphics[width=0.48\textwidth,height=0.4\textwidth]{NS_evara13m600n3t104cn}
\includegraphics[width=0.48\textwidth,height=0.4\textwidth]{NS_evara18m600n3t104cn}
\caption{Numerical solutions at $t=1$ for Example \ref{exp:Burgers} with the uniform mesh (Mesh 1) and different values of viscosity $\varepsilon$.  $M = 600,\, N=3, c=d=0, \tau=10^4, \Delta t=10^{-3}$. Left: $\alf=1.3$, right: $\alf=1.8$.}
\label{fig:diffviscosity}
\end{figure}

%\begin{figure}[H]
%\begin{minipage}[b]{0.5\textwidth}
%\centering
%\includegraphics[width=6.5cm,height=6cm]{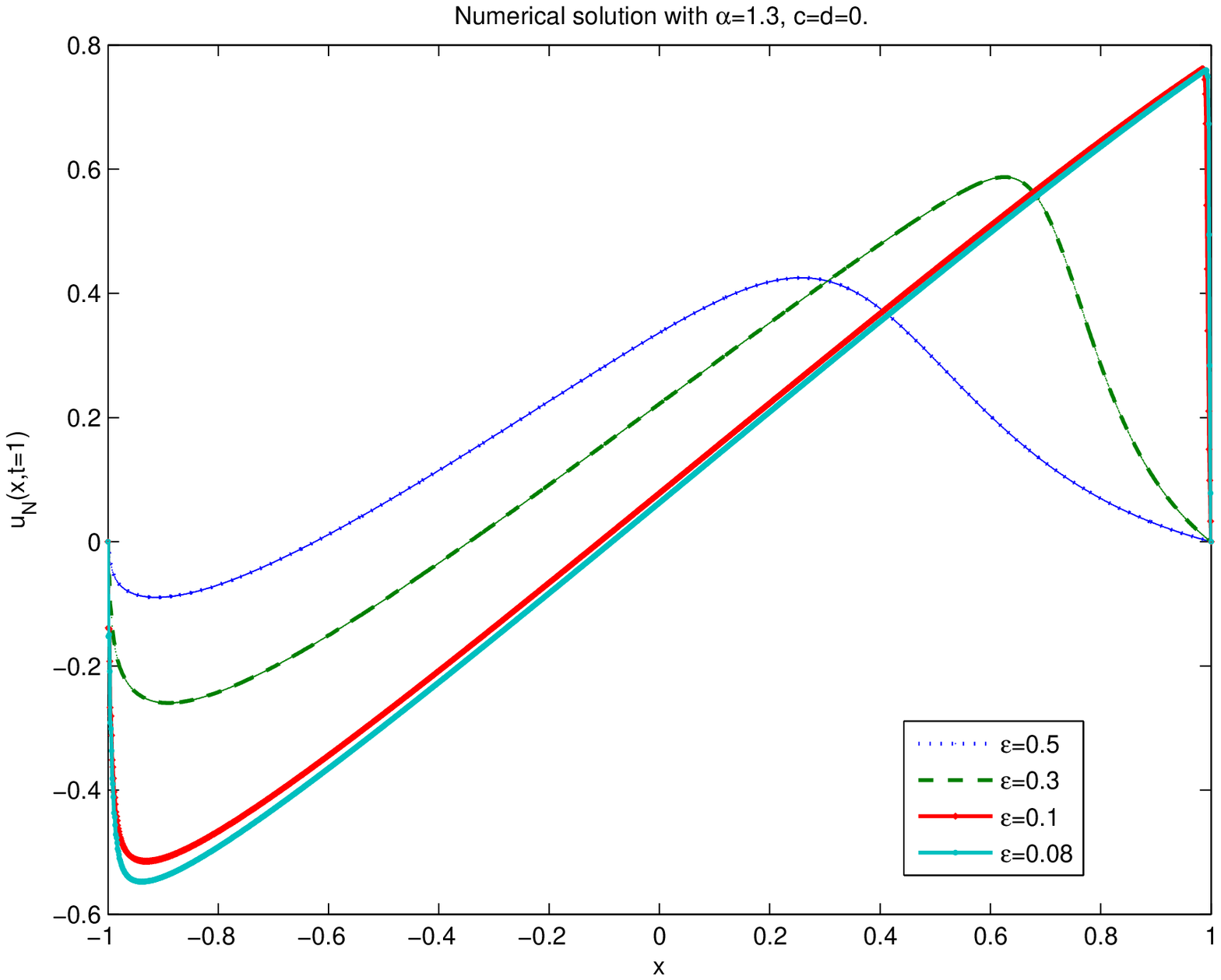}
%%(b)  Case 1: $\alf=1.3$.
%\end{minipage}
%%
%\begin{minipage}[b]{0.5\textwidth}
%\centering
%\includegraphics[width=6.5cm,height=6cm]{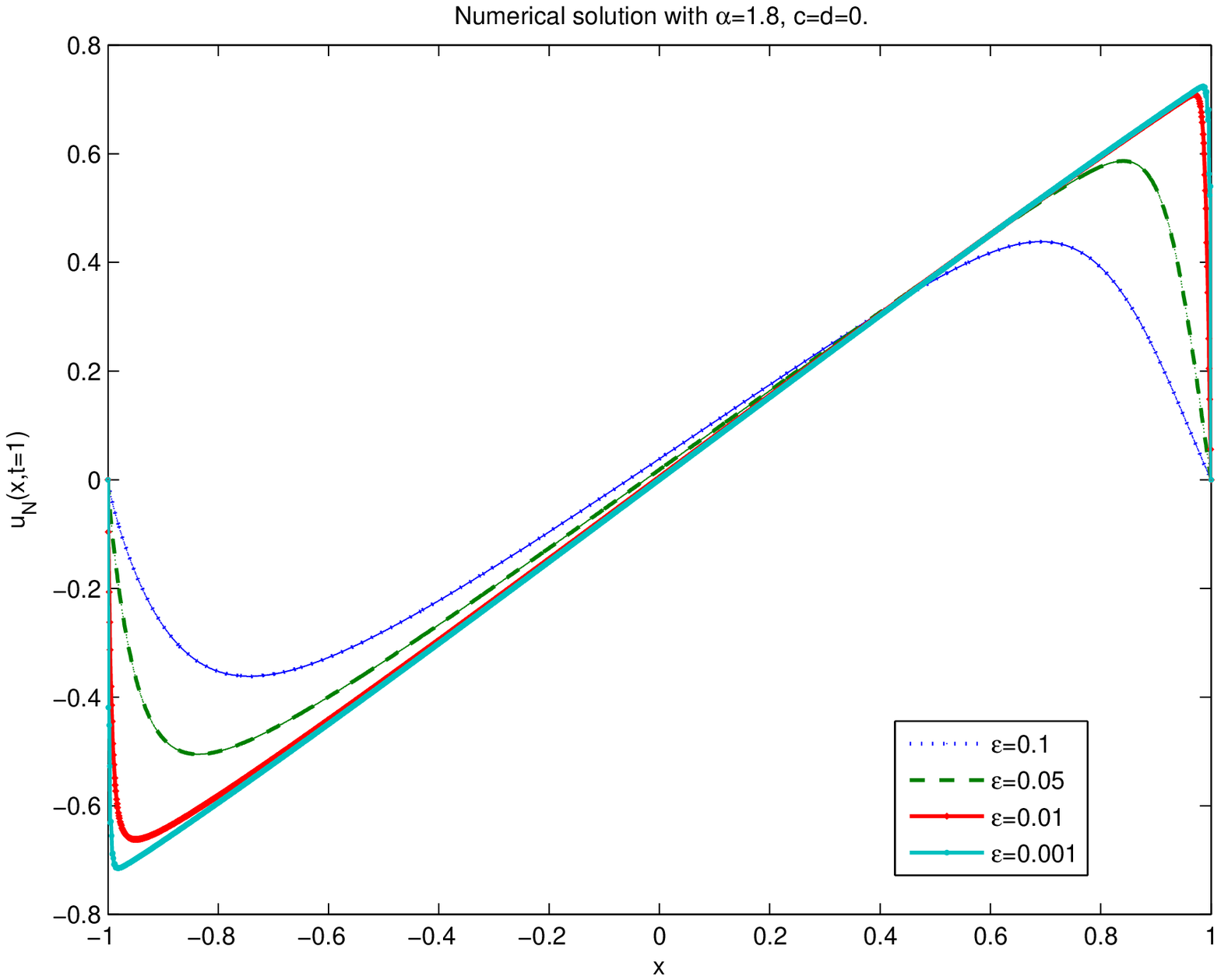}
%%(b)  Case 1: $\alf=1.8$.
%\end{minipage}
%\caption{Numerical solutions at $t=1$ for Example \ref{exp:Burgers} with the uniform mesh (Mesh 1) and different values of viscosity $\varepsilon$.  $M = 600,\, N=3, c=d=0, \tau=10^4, \Delta t=10^{-3}$. Left: $\alf=1.3$, right: $\alf=1.8$.}
%\label{fig:diffviscosity}
%\end{figure}

We now consider the variable-order cases, i.e., Cases 2-5. The numerical solutions at time $t = 1$ are shown in Fig. \ref{fig:variablealp}. Again, we see that the oscillations can be eliminated by refining the mesh, see, e.g, upper left and lower left plots of Fig. \ref{fig:variablealp}.

\begin{figure}[!t]


\centering
\includegraphics[width=0.48\textwidth,height=0.4\textwidth]{NSt1N3tau105dt10_3c0d0e1_case2}
\;
\includegraphics[width=0.48\textwidth,height=0.4\textwidth]{NSt1N3tau105dt10_3c0d0e1_case3}
\includegraphics[width=0.48\textwidth,height=0.4\textwidth]{NSt1N3tau105dt10_3c0d0e1_case4}
\;
\includegraphics[width=0.48\textwidth,height=0.4\textwidth]{NSt1N3tau105dt10_3c0d0e1_case5}
\caption{Numerical solutions at $t=1$ for Example \ref{exp:Burgers} with the uniform mesh (Mesh 1) and $\varepsilon=1,\, N=3,\, c=d=0,\, \tau=10^5,\, \Delta t=10^{-3}$. Upper left: Case 2: $\alf=\frac{5+4x}{10}+1$, upper right: Case 3: $\alf=\frac{5-4x}{10}+1$, lower left: Case 4: $\alf=\frac{4|\sin(10\pi(x-t))|}{5}+1.1$, lower right: Case 5: $\alf=\frac{4}{5}|xt|+1.1$.}
\label{fig:variablealp}
\end{figure}

%%%====  figure 16
%\begin{figure}[H]
%\begin{minipage}[b]{0.5\textwidth}
%\centering
%\includegraphics[width=6.5cm,height=6cm]{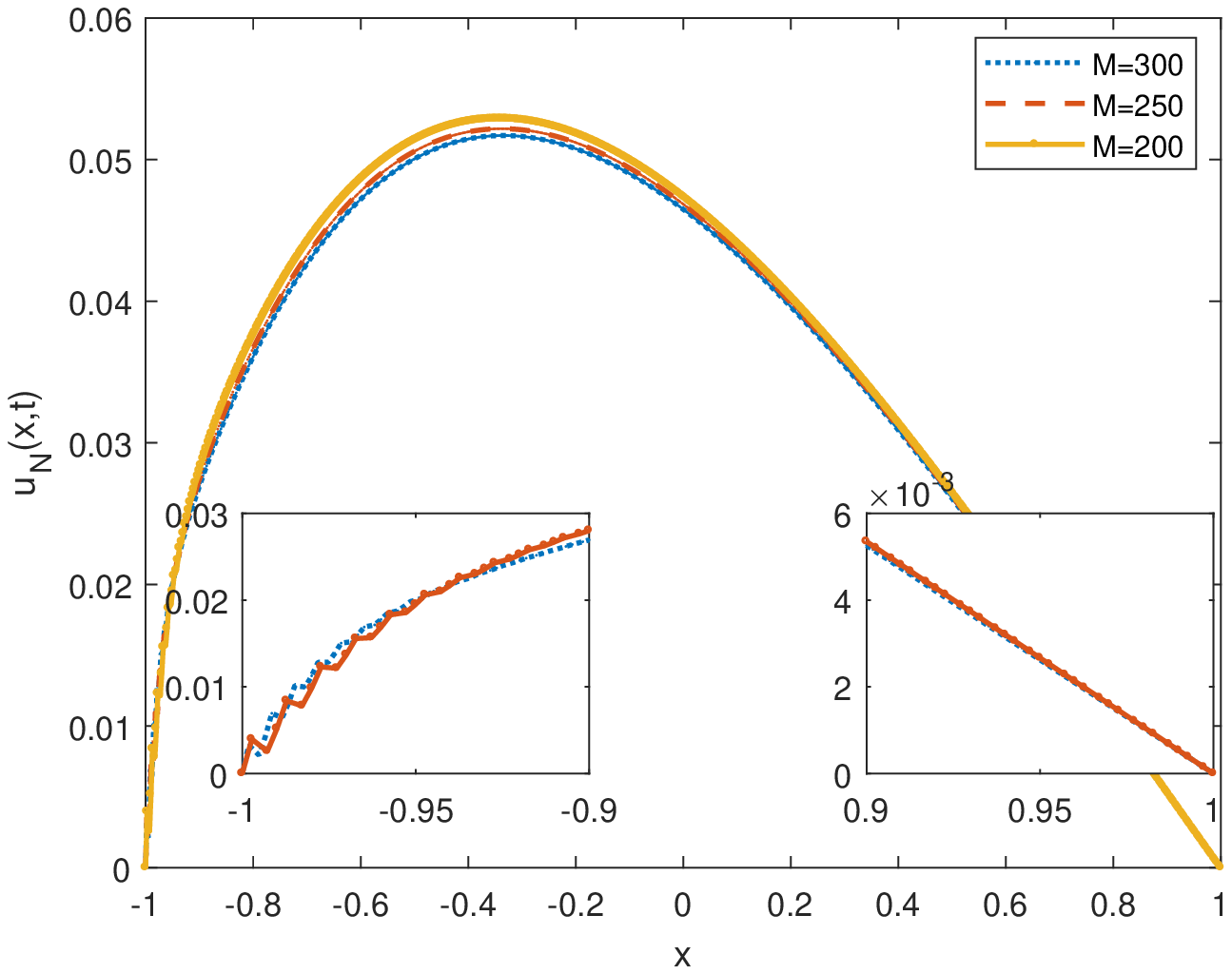}
%%(a) Case 2: $\alf=\frac{5+4x}{10}+1$.
%%% label for entire figure
%\end{minipage}
%\begin{minipage}[b]{0.5\textwidth}
%\centering
%\includegraphics[width=6.5cm,height=6cm]{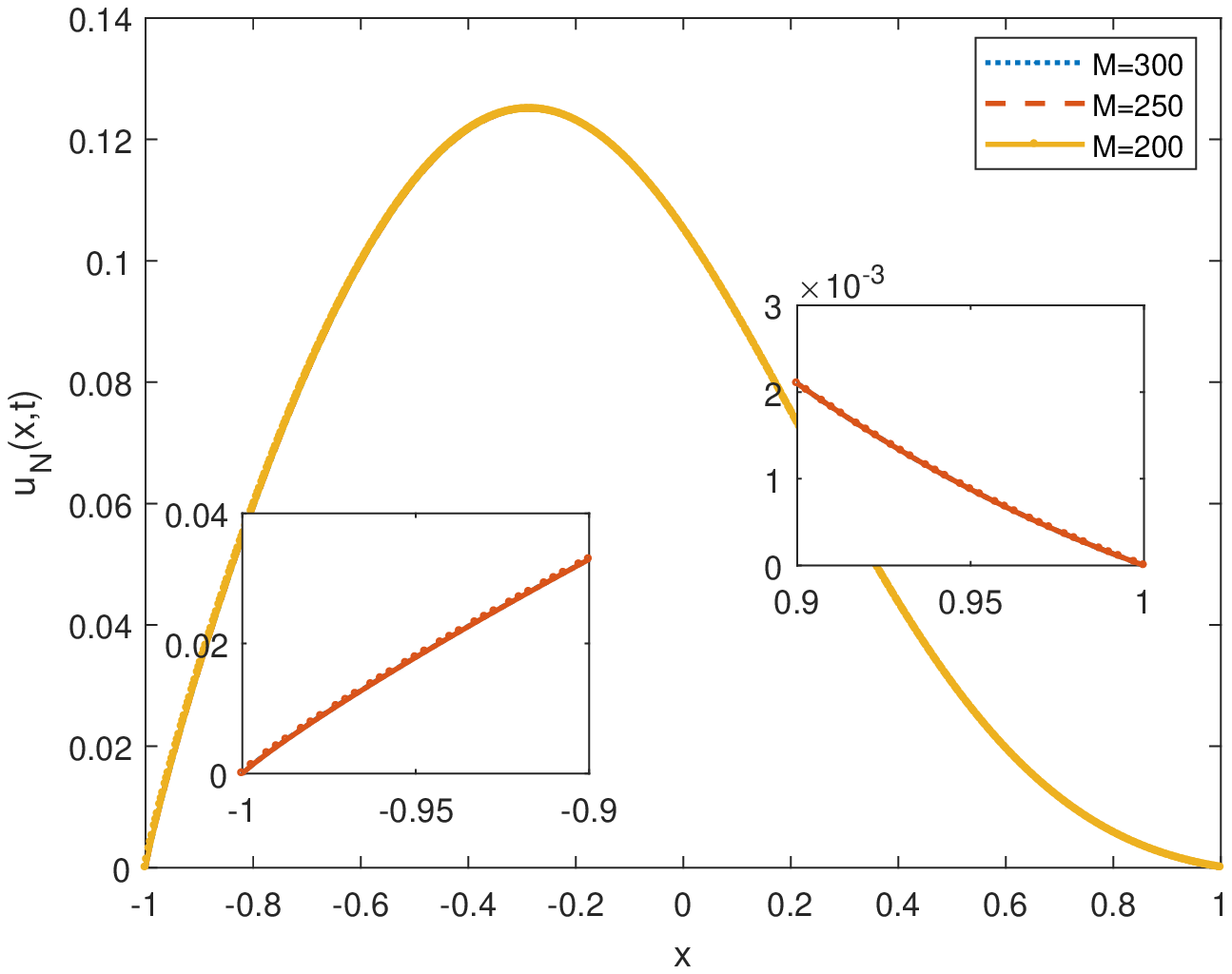}
%%(b)  Case 3: $\alf=\frac{5-4x}{10}+1$.
%\end{minipage}
%%
%\begin{minipage}[b]{0.5\textwidth}
%\centering
%\includegraphics[width=6.5cm,height=6cm]{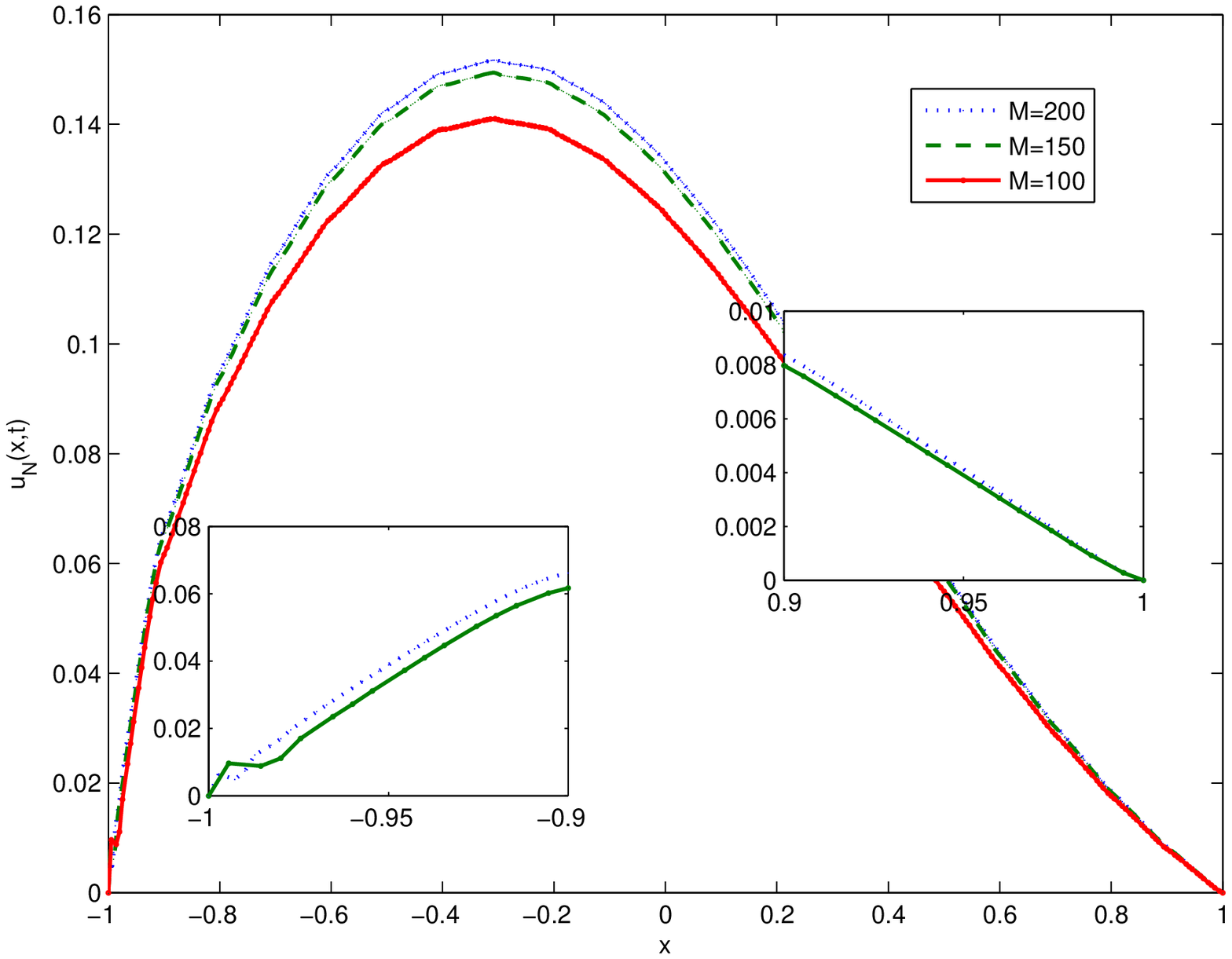}
%%(c) Case 4: $\alf=\frac{4|\sin(10\pi(x-t))|}{5}+1.1$.
%%% label for entire figure
%\end{minipage}
%\begin{minipage}[b]{0.5\textwidth}
%\centering
%\includegraphics[width=6.5cm,height=6cm]{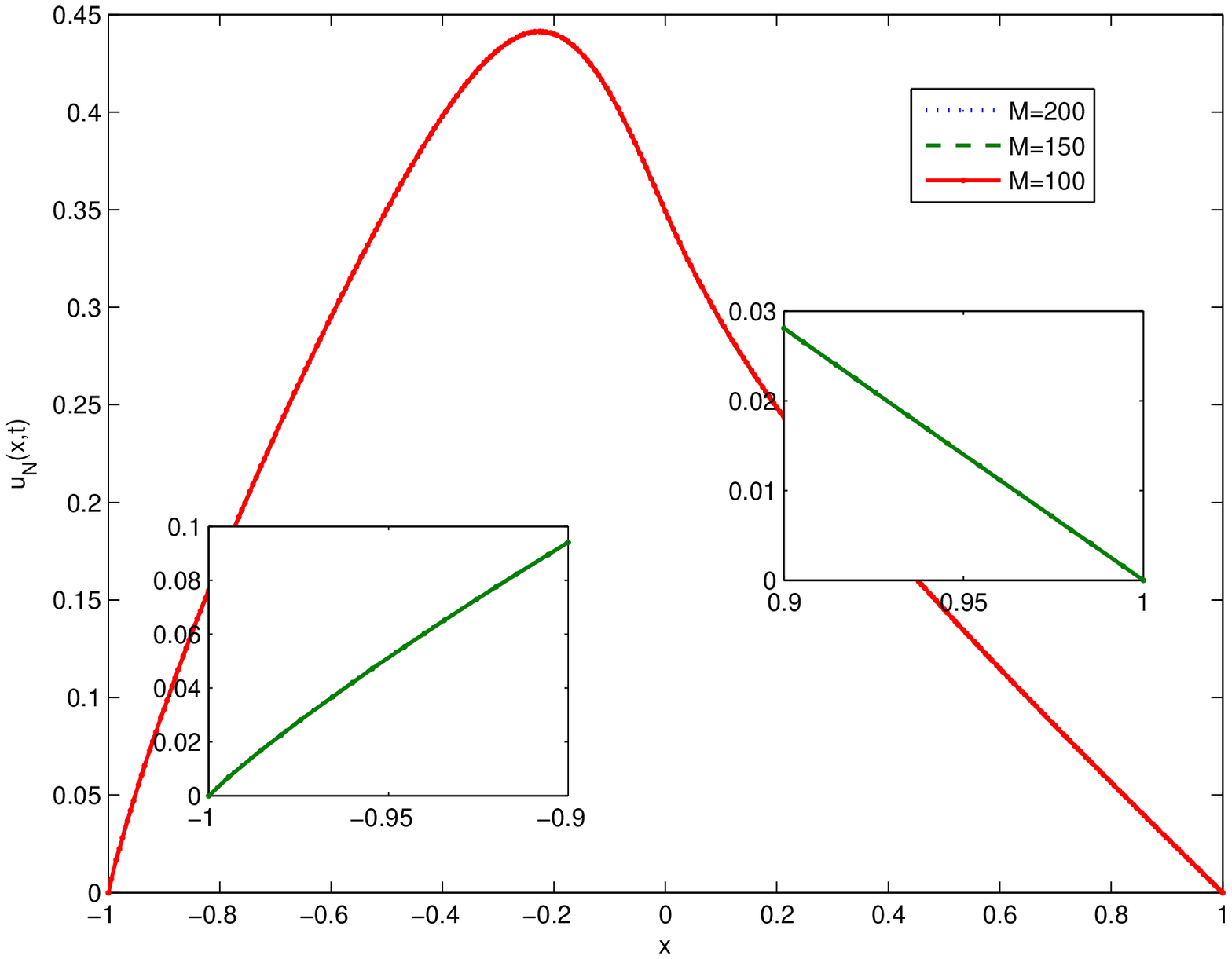}
%%(d) Case 5: $\alf=\frac{4}{5}|xt|+1.1$.
%\end{minipage}
%\caption{Numerical solutions at $t=1$ for Example \ref{exp:Burgers} with the uniform mesh (Mesh 1) and $\varepsilon=1,\, N=3,\, c=d=0,\, \tau=10^5,\, \Delta t=10^{-3}$. Upper left: Case 2: $\alf=\frac{5+4x}{10}+1$, upper right: Case 3: $\alf=\frac{5-4x}{10}+1$, lower left: Case 4: $\alf=\frac{4|\sin(10\pi(x-t))|}{5}+1.1$, lower right: Case 5: $\alf=\frac{4}{5}|xt|+1.1$.}
%\label{fig:variablealp}
%\end{figure}

\section{Conclusion}\label{sec:conclusion}
In this paper, we present a multi-domain spectral collocation method (MDSCM) for numerically solving fractional partial differential equations that cannot be easily solved with Galerkin single- or multi-domain spectral methods. We construct a set of nodal basis functions and derive the variable-order multi-domain fractional differentiation matrix, which can be computed efficiently and are used to formulate the proposed method.
We also employ a penalty technique by minimizing the jump in (integer) fluxes to stabilize the MDSCM, which can slow down the growth of the condition number of the corresponding differentiation matrix.

Various numerical tests show that the MDSCM achieves spectral accuracy with respect to the order of polynomial under the assumption that the exact solution is sufficiently smooth.
We also demonstrate that the MDSCM has a big advantage in obtaining high accuracy when the solutions have low regularity compared with the single-domain spectral method.
Also, the multi-domain method is more flexible in performing $h$ and $p$ refinement for fractional boundary value problems as well as problems with interior regions of low regularity or very steep gradients.
In addition, it is easy to apply MDSCM to variable-coefficient problems with \textit{variable-order} fractional derivatives.
Moreover, by using the penalty method, we don't only stabilize the MDSCM but also improve greatly the accuracy of the scheme. It is especially effective for solutions with very low regularity, in which case, the non-penalized MDSCM fails to converge while the penalized MDSCM does converge in the $L^\infty$ sense. Furthermore, by choosing a suitable penalty parameter, the penalty-based MDSCM exhibits superior performance compared with the non-penalty version and the spectral element method based on the Galerkin formulation.
Unfortunately, currently we do not have a rigorous theory to show how to choose the optimal value of the penalty parameter, which is an issue of great practical interest, and this should be addressed in future work. Another open issue is the optimal penalty procedure.
In the present work, we penalize the integer fluxes, but another possibility is to penalize the fractional fluxes; however, in this case, we need to use the non-polynomial basis, namely, the poly-fractonomials proposed in \cite{ZayK13}.

\bibliographystyle{siamplain} %elsart-num} %model1b-num-names} %elsarticle-num}
%\bibliography{reference}

\end{document}